\newtheorem{theorem}{Theorem}[section]
\newtheorem{proposition}{Proposition}[section]
\theoremstyle{definition}
\theoremstyle{remark}
\newtheorem{remark}[theorem]{Remark}
\numberwithin{equation}{section}
\subjclass{}
\keywords{Quasilinear singular system, p-Laplacian, multiplicity, nodal solution, sub-supersolution, Leray–Schauder topological degree}
\begin{document}
\title[Multiple solutions to Gierer-Meinhardt type systems]{Three solutions
with precise sign properties for Gierer-Meinhardt type system}
\subjclass[2020]{35J62, 35J92, 35B09, 35B99}
\keywords{Neumann boundary condition; Gierer-Meinhardt system; multiplicity;
nodal solution; sub-supersolution; Leray-Schauder topological degree}

\begin{abstract}
We establish the existence of three solutions for sign-coupled
Gierer-Meinhardt type system with Neumann boundary conditions. Two solutions
are of opposite constant-sign while the third solution is nodal with
synchronous sign components. The approach combines sub-supersolutions method
and Leray-Schauder topological degree involving perturbation argument.
\end{abstract}

\author{Abdelkrim Moussaoui}
\address{Department of Physico-Chemical Biology, Faculty of Natural and Life
Sciences\\
and Applied Math. Laboratory, Faculty of Exacte Sciences \\
A. Mira Bejaia University, Algeria}
\email{abdelkrim.moussaoui@univ-bejaia.dz}
\maketitle

\section{Introduction}

\label{S1}

Let $\Omega $ is a bounded domain in $%
\mathbb{R}
^{N}$ $\left( N\geq 2\right) $ with a smooth boundary $\partial \Omega $. We
consider the following system of semilinear elliptic equations%
\begin{equation*}
(\mathrm{P})\qquad \left\{ 
\begin{array}{ll}
\Delta u-u+f_{1}(v)(\frac{|u|^{\alpha _{1}}}{|v|^{\beta _{1}}}+\rho )=0 & 
\text{in }\Omega , \\ 
\Delta v-v+f_{2}(u)\frac{|u|^{\alpha _{2}}}{|v|^{\beta _{2}}}=0 & \text{in }%
\Omega , \\ 
\frac{\partial u}{\partial \eta }=\frac{\partial v}{\partial \eta }=0\text{
\ on }\partial \Omega , & 
\end{array}%
\right.
\end{equation*}%
where $\Delta $ stands for the Laplace differential operator, $\eta $
denotes the unit outer normal to $\partial \Omega $ and $\rho >0$ is a
parameter.\ The exponents\ $\alpha _{i}\in (0,1)$ and $0\leq \beta _{i}<1$\ (%
$i=1,2$) satisfy the following condition 
\begin{equation}
\max \{\alpha _{1}+2\beta _{1},\text{ }\alpha _{2}+\frac{\beta _{2}}{2}\}<1,
\label{alpha}
\end{equation}%
while the functions $f_{i}\in L^{\infty }(\Omega )$ defined by $%
f_{i}(s):=f_{i}(sgn(s)),$ for all $s\in 
\mathbb{R}
,$ satisfy%
\begin{equation*}
sgn(f_{i}(s))=\left\{ 
\begin{array}{ll}
1 & \text{for }s\geq 0, \\ 
-1 & \text{for }s<0,%
\end{array}%
\right. ,\text{ for }i=1,2,
\end{equation*}%
where $sgn(\cdot )$ denotes the sign function. Functions $f_{1}$ and $f_{2}$
suggest that system $(\mathrm{P})$ is sign-coupled.\ This is expressed by
the fact that the first (resp. second) equation of $(\mathrm{P})$ depends on
the sign of the second (resp. first) component $v$. When only positive
solutions $(u,v)$ are considered, $f_{1}(v)\equiv f_{2}(u)\equiv 1$ and
therefore, system $(\mathrm{P})$ is reduced to%
\begin{equation*}
\left\{ 
\begin{array}{ll}
\Delta u-u+\frac{u^{\alpha _{1}}}{v^{\beta _{1}}}+\rho =0 & \text{in }\Omega
, \\ 
\Delta v-v+\frac{u^{\alpha _{2}}}{v^{\beta _{2}}}=0 & \text{in }\Omega ,%
\end{array}%
\right.
\end{equation*}%
which has been the focus of particular attention in the contexts of Neumann
and Dirichlet boundary conditions (see, e.g., \cite{1, 2, M2, LPS}).

By a solution of problem $(\mathrm{P})$ we mean $(u,v)\in \mathcal{H}%
^{1}(\Omega )\times \mathcal{H}^{1}(\Omega )$ such that 
\begin{eqnarray*}
\int_{\Omega }(\nabla u\nabla \varphi _{1}+u\varphi _{1})\ \mathrm{d}x
&=&\int_{\Omega }f_{1}(v)(\frac{|u|^{\alpha _{1}}}{|v|^{\beta _{1}}}+\rho
)\varphi _{1}\ \mathrm{d}x, \\
\int_{\Omega }(\nabla v\nabla \varphi _{2}+v\varphi _{2})\ \mathrm{d}x
&=&\int_{\Omega }f_{2}(u)\frac{|u|^{\alpha _{2}}}{|v|^{\beta _{2}}}\varphi
_{2}\ \mathrm{d}x,
\end{eqnarray*}%
for all $\varphi _{1},\varphi _{2}\in \mathcal{H}^{1}(\Omega ),$ provided
the integrals in the right-hand side of the above identities exist.

System $(\mathrm{P})$ is the elliptic counterpart of Gierer-Meinhardt model 
\cite{GMsyst}, proposed in 1972, which is a typical example of a
reaction-diffusion system that has been extensively studied in recent years.
The general model proposed by Gierer and Meinhardt may be written as%
\begin{equation*}
(\mathrm{GM})\qquad \left\{ 
\begin{array}{ll}
u_{t}=d_{1}\Delta u-\hat{d}_{1}u+c\rho \frac{u^{\alpha _{1}}}{v^{\beta _{1}}}%
+\rho \text{ } & \text{in }\Omega \times \left[ 0,T\right] , \\ 
v_{t}=d_{2}\Delta v-\hat{d}_{2}v+c^{\prime }\rho ^{\prime }\frac{u^{\alpha
_{2}}}{v^{\beta _{2}}} & \text{in }\Omega \times \left[ 0,T\right] ,%
\end{array}%
\right.
\end{equation*}%
subject to Neumann boundary conditions. The constants $\hat{d}_{1},\hat{d}%
_{2},c,c^{\prime }$ and $\rho $ are positive, $d_{1},d_{2}$ are diffusion
coefficients with $d_{1}\ll d_{2},$ the exponents $\alpha _{i},\beta
_{i}\geq 0$ satisfy the relation $\beta _{1}\alpha _{2}>\left( \alpha
_{1}-1\right) \left( \beta _{2}+1\right) $. System $(\mathrm{GM})$ describes
the interaction between activator $u(t,x)$ and inhibitor $v(t,x)$ in diverse
biological systems, with a particular emphasis on those pertaining to cell
biology and physiology.

The elliptic system $(\mathrm{GM})$ have attracted significant interest,
resulting in a substantial number of research papers. When $d_{2}$
approaches infinity, the existence, stability, and dynamics of spike
positive solutions have been investigated in \cite{GG, GWW, Ni-Takagi,
Ni-Takagi2, W1}. Conversely, when $d_{2}$ is bounded ($d_{2}<+\infty $), the
focus shifts to the analyses presented in \cite{JN,LPS, NI-Takagi-Yanagida,
WW, Wei2}. Extending the spatial domain to the whole space $\Omega =%
\mathbb{R}
^{N}$, \cite{MKT} (for $N\geq 3$), \cite{DKC, DKW} (for $N=1,2$), and \cite%
{KR, KWY} (for $N=3$) have addressed the existence, uniqueness, and
structural features of positive solutions for Gierer-Meinhardt type systems $%
\left( \mathrm{P}\right) $. In the specific case where $d_{1}$ and $d_{2}$
both equal $1,$ the Neumann elliptic system $(\mathrm{GM})$ is reduced to $%
\left( \mathrm{P}\right) $ . In this context, when $\rho \equiv 0,$ system $%
\left( \mathrm{P}\right) $ has been recently studied in \cite{M2, M4},
showing the existence of three distinct solutions. In \cite{M2}, the
obtained solutions are all positive while in \cite{M4}, where $\left( 
\mathrm{P}\right) $ is subjected to Dirichlet boundary conditions, it has
been established that one of the solutions is nodal and located between two
opposite constant-sign solutions. Recall from \cite{M3, M4} that a solution
for system $\left( \mathrm{P}\right) $ whose components at least are not of
the same constant-sign is nodal. We mention that, unlike to what has been
stated in \cite{M2}, and in the line with what has been established in \cite%
{MedjMous}, there can be no solutions to Neumann problem $\left( \mathrm{P}%
\right) $ with zero trace condition on $\partial \Omega $. Therefore, only
two of the three positive solutions obtained for the Neumann-type system $%
\left( \mathrm{P}\right) $ in \cite{M2} should be retained.

Our main purpose is to establish the existence of three distinct solutions
for Gierer-Meinhardt system $\left( \mathrm{P}\right) $ with a precise sign
information: two of them are of opposite constant-sign, while the third is
nodal with synchronous sign-changing components. The main result is
formulated as follows.

\begin{theorem}
\label{T}Under assumption (\ref{alpha}), problem $(\mathrm{P})$ admits at
least two opposite constant-sign solutions $(u_{+},v_{+})\in int\mathcal{C}%
_{+}^{1}(\overline{\Omega }))\times int\mathcal{C}_{+}^{1}(\overline{\Omega }%
),$ $(u_{-},v_{-})\in -int\mathcal{C}_{+}^{1}(\overline{\Omega })\times -int%
\mathcal{C}_{+}^{1}(\overline{\Omega }).$ If $\beta _{1}=0$, $(\mathrm{P})$
has a third nodal solution $(u^{\ast },v^{\ast })\in \mathcal{H}^{1}(\Omega
)\times \mathcal{H}^{1}(\Omega )$ satisfying $u^{\ast }v^{\ast }>0$ a.e. in $%
\Omega $.
\end{theorem}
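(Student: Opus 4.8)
The plan is to split the proof into three parts following the structure of the statement: first construct the positive solution $(u_{+},v_{+})$, then the negative solution $(u_{-},v_{-})$ by a sign-symmetry argument, and finally produce the nodal solution $(u^{\ast},v^{\ast})$ via a perturbation-plus-degree argument in the case $\beta_{1}=0$.

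\medskip

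\noindent\textbf{Step 1: Positive solution by sub-supersolutions.} I would first build an ordered pair of sub- and supersolutions for the ``positive'' version of $(\mathrm{P})$, i.e. the system obtained by replacing $|u|^{\alpha_i}$, $|v|^{\beta_i}$ and $f_i$ by $u^{\alpha_i}$, $v^{\beta_i}$ and $1$. For the subsolution I expect constants (small positive constants $\underline{c}_1,\underline{c}_2$ work because the Neumann datum is homogeneous and, for $c$ small, $-c + c^{\alpha}/\overline{c}^{\beta}+\rho\geq 0$ and $-c+c^{\alpha}/\overline{c}^{\beta}\geq 0$). For the supersolution I would solve auxiliary Neumann problems $-\Delta \overline{u}+\overline{u} = M_1(\underline{c}_2^{-\beta_1}\overline{u}^{\alpha_1}+\rho)$-type equations, or again take large constants, using condition (\ref{alpha}) to absorb the singular terms: the exponents $\alpha_1+2\beta_1<1$, $\alpha_2+\beta_2/2<1$ are exactly what is needed to keep the right-hand sides subhomogeneous so that large constants dominate. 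A fixed-point / Schauder argument on the truncated system inside the order interval $[\underline{c}_1,\overline{u}]\times[\underline{c}_2,\overline{v}]$ then yields a solution, and elliptic regularity (the right-hand sides lie in $L^{\infty}(\Omega)$ once we stay away from zero) together with the strong maximum principle and Hopf's lemma gives $(u_{+},v_{+})\in \operatorname{int}\mathcal{C}^{1}_{+}(\overline{\Omega})\times\operatorname{int}\mathcal{C}^{1}_{+}(\overline{\Omega})$. The key point to verify carefully is that the fixed point actually solves $(\mathrm{P})$ and not just the truncated problem, which follows from the order localization.

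\medskip

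\noindent\textbf{Step 2: Negative solution by symmetry.} Because of the structural identities $f_i(s)=f_i(\operatorname{sgn}(s))$ and $\operatorname{sgn}(f_i(s))=\operatorname{sgn}(s)$, the map $(u,v)\mapsto(-u,-v)$ transforms $(\mathrm{P})$ with parameter $\rho$ into the analogous system where the roles of the nonlinearities are reflected; more precisely, if $(u_{+},v_{+})$ solves the positive system then $(-u_{+},-v_{+})$ will solve $(\mathrm{P})$ provided the first-equation extra term $+\rho$ is handled. I would either repeat Step 1 with constants of the opposite sign as sub-supersolutions for the genuinely negative problem (noting that the $+\rho$ term now helps, since $f_1(v)\rho = -\rho$ for $v<0$ pushes in the right direction), obtaining $(u_{-},v_{-})\in -\operatorname{int}\mathcal{C}^{1}_{+}(\overline{\Omega})\times-\operatorname{int}\mathcal{C}^{1}_{+}(\overline{\Omega})$. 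Either way this step is essentially a mirror image of the first and should be routine once the bookkeeping of signs is settled.

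\medskip

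\noindent\textbf{Step 3: Nodal solution via perturbation and Leray--Schauder degree (case $\beta_1=0$).} This is the main obstacle. With $\beta_1=0$ the first equation loses its singularity in $v$, which is what allows the construction. The idea is to introduce a perturbed operator: approximate the singular term $|u|^{\alpha_2}/|v|^{\beta_2}$ by $|u|^{\alpha_2}/(|v|+\varepsilon)^{\beta_2}$ (and similarly regularize so the solution map is well-defined and compact on all of $\mathcal{H}^{1}\times\mathcal{H}^{1}$), and to work in a large ball $B_R$ of $\mathcal{C}^{1}(\overline{\Omega})\times\mathcal{C}^{1}(\overline{\Omega})$ chosen so that all solutions of the perturbed problems lie strictly inside, using (\ref{alpha}) for the a priori bound. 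One computes the Leray--Schauder degree of the perturbed solution operator on $B_R$ and shows it is nonzero (e.g. equal to $1$ by a homotopy to a problem with a unique solution), while the degrees on two small disjoint neighborhoods $U_{+}\ni(u_{+},v_{+})$ and $U_{-}\ni(u_{-},v_{-})$ — built from the order intervals of Steps 1--2, on which the truncated operator has degree $1$ as well — add up to $2$. Since $2\neq 1$, by excision and additivity of the degree there must be a solution $(u_{\varepsilon}^{\ast},v_{\varepsilon}^{\ast})$ of the perturbed problem in $B_R\setminus(\overline{U_{+}}\cup\overline{U_{-}})$. This solution cannot be one of the constant-sign ones; a bootstrap / comparison argument then forces its components to change sign. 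Finally I would let $\varepsilon\to 0$: the a priori bounds give weak and (by regularity) $\mathcal{C}^{1}$-convergence to a limit $(u^{\ast},v^{\ast})$, and one must show the limit is a genuine (hence not constant-sign) solution and that $u^{\ast}v^{\ast}>0$ a.e. — the synchronous sign-change — which should come from the sign-coupling: where $u^{\ast}<0$ the factor $f_1(v^{\ast})$ is forced to be $-1$, i.e. $v^{\ast}<0$ there, and symmetrically via the second equation, pinning the two components to the same sign a.e. The delicate points are the non-degeneracy needed to separate the degree contributions, the passage to the limit without the limit solution collapsing onto a constant-sign solution or hitting the singularity, and verifying $u^{\ast}v^{\ast}>0$ rather than merely $u^{\ast}v^{\ast}\geq 0$.
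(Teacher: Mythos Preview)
Your Steps 1 and 2 are essentially what the paper does (the auxiliary Neumann problems $-\Delta w+w=c$ with homogeneous Neumann data have constant solutions, so the sub- and supersolutions are in effect constants, and the negative solution is obtained by reflection).

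Step 3, however, diverges from the paper and contains a real gap. Your degree arithmetic (total degree $1$ on the big ball, degree $1$ on each of $U_{+}$ and $U_{-}$, hence a solution in $B_R\setminus(\overline{U_{+}}\cup\overline{U_{-}})$) only produces a third solution \emph{distinct from the two specific pairs} $(u_{\pm},v_{\pm})$; it does not exclude that this third solution is another constant-sign solution, since nothing in Steps 1--2 gives uniqueness in the positive or negative order intervals. Your sentence ``a bootstrap/comparison argument then forces its components to change sign'' is exactly where the difficulty lies, and you give no mechanism for it.

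The paper resolves this with a different localization. The key observation is property (\ref{11}) of Theorem~\ref{T1}: \emph{every} positive solution of $(\mathrm{P})$ in $[0,\overline{u}]\times[0,\overline{v}]$ automatically satisfies $u_{+}>\underline{u}$ (and symmetrically for negative solutions). Consequently any solution lying in the small box $\mathcal{M}=[-\underline{u},\underline{u}]\times[-\underline{v},\underline{v}]$ is forced to be nodal. The degree argument is then designed to land a solution in $\mathcal{M}$: the paper shows (i) the total degree on the big ball $\mathcal{B}_{R_{\varepsilon}}(0)$ is $0$, via a homotopy to the \emph{unsolvable} Neumann problem $-\Delta u+u=u^{+}+1$, $-\Delta v+v=v^{+}+1$; and (ii) the degree on $\mathcal{B}_{R_{\varepsilon}}(0)\setminus\overline{\mathcal{M}}$ is nonzero, via a homotopy to a problem whose unique solution $(\phi_{1},\phi_{1})$ lies outside $\mathcal{M}$. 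Additivity then gives nonzero degree on $\mathcal{M}$, hence a solution there, automatically nodal by (\ref{11}). The synchronous-sign property $u^{\ast}v^{\ast}\geq 0$ is obtained afterwards by testing the equations with $-(u^{\ast})^{-}$ and $-(v^{\ast})^{-}$, not from the degree step. Your scheme might be repairable, but it would require precisely such an a~priori exclusion of all constant-sign solutions from some region, which you have not supplied.
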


The proof combines sub-supersolutions techniques and topological degree
theory. It falls into two parts, each corresponding to the statements of
Theorems \ref{T1} and \ref{T2}.\ The existence of opposite constant sign
solutions $(u_{+},v_{+})$ and $(u_{-},v_{-})$ to system $(\mathrm{P})$ is
stated in Theorem \ref{T1}. They are located in positive and negative
rectangles formed by two opposite constant sign sub-supersolutions pairs.
The latter are constructed by a choice of suitable functions with an
adjustment of adequate constants. Furthermore, for any positive solution $%
(u_{+},v_{+})$ and negative solution $(u_{-},v_{-})$ enclosed within the
rectangle formed by the opposite supersolutions, we show that the components 
$u_{+}$ and $u_{-}$ are invariably greater and less than their corresponding
positive and negative subsolutions. This strongly indicates that any
solution is nodal if its first component is positive and less than the
positive subsolutions or negative and greater than the negative subsolution.
This point is crucial to show the existence of a nodal solution $(u^{\ast
},v^{\ast })$ provided by Theorem \ref{T2}. Using suitable truncation
arguments and topological degree theory, we provide a third solution $%
(u^{\ast },v^{\ast })$ to problem $(\mathrm{P})$ that lies between the
previously specified positive and negative rectangles. The aforementioned
conclusion is thus the consequence of the sign-coupling of system $(\mathrm{P%
})$. This further shows that the components $u^{\ast }$ and $v^{\ast }$ are
synchnous sign-changing. We note that a control near the singularity of all
the terms involved in problem $(\mathrm{P})$ represents a significant part
of the argument. This necessarily involves the reconfiguration of the
competitive system $(\mathrm{P})$ to a cooperative model by setting the
condition $\beta _{1}=0$ in (\ref{alpha}). For a more thorough examination
of systems with cooperative and competitive structures, we refer to \cite%
{MM1, MM2, MM3}.

The rest of the paper is organized as follows. Section \ref{S2} deals with
the existence of constant-sign solutions for system $(\mathrm{P}),$ while
section \ref{S4} provides a nodal solution.

\section{Two opposite constant-sign solutions}

\label{S2}

In the sequel, the Hilbert spaces $\mathcal{H}^{1}(\Omega )$ and $%
L^{2}(\Omega )$ are equipped with the usual norms $\Vert \cdot \Vert _{1,2}$
and $\Vert \cdot \Vert _{2}$, respectively. We denote by $\mathcal{H}%
_{+}^{1}(\Omega )=\{w\in \mathcal{H}^{1}(\Omega ):w\geq 0$ a.e. in $\Omega
\}.$ We also utilize the H\"{o}lder spaces $\mathcal{C}^{1}(\overline{\Omega 
})$, $\mathcal{C}^{1,\tau }(\overline{\Omega })$ for $\tau \in (0,1),$ $%
\mathcal{C}_{+}^{1}(\overline{\Omega })=\{u\in \mathcal{C}^{1}(\overline{%
\Omega }):u\geq 0$ for all $x\in \overline{\Omega }\}$ and $int\mathcal{C}%
_{+}^{1}(\overline{\Omega })=\{u\in \mathcal{C}^{1}(\overline{\Omega }%
):u(x)>0$ for all $x\in \overline{\Omega }\}$.

Let $\phi _{1}\in int\mathcal{C}_{+}^{1}(\overline{\Omega })$ be the
positive eigenfunction associated with the principal eigenvalue $\lambda
_{1} $ which satisfies%
\begin{equation}
-\Delta \phi _{1}+\phi _{1}=\lambda _{1}\phi _{1}\text{ in }\Omega ,\ \frac{%
\partial \phi _{1}}{\partial \eta }=0\text{ on }\partial \Omega .  \label{4}
\end{equation}%
Set $\mu ,\overline{\mu }>0$ constants such that 
\begin{equation}
\bar{\mu}=\max_{x\in \overline{\Omega }}\phi _{1}(x)\geq \min_{x\in 
\overline{\Omega }}\phi _{1}(x)=\underline{\mu }.  \label{1}
\end{equation}%
Let $w\in intC_{+}^{1}(\overline{\Omega })$ be the solution of Neumann
problem%
\begin{equation}
-\Delta w+w=1\text{ in }\Omega ,\ \frac{\partial w}{\partial \eta }=0\text{
on }\partial \Omega ,  \label{w}
\end{equation}%
which verify 
\begin{equation}
\frac{\phi _{1}}{c_{0}}\leq w\leq c_{0}\phi _{1}\text{ on }\overline{\Omega }%
,  \label{ew}
\end{equation}%
for certain constant $c_{0}>1$ (see \cite{M2}). By comparison principle \cite%
[Lemma 3.2]{ST}, it is readly seen that the solution $y\in int\mathcal{C}%
_{+}^{1}(\overline{\Omega })$ of the homogeneous Neumann problem%
\begin{equation}
-\Delta y+y=1+\rho \text{ in }\Omega ,\ \frac{\partial y}{\partial \eta }=0%
\text{ on }\partial \Omega ,  \label{y}
\end{equation}%
satisfies 
\begin{equation}
\frac{\phi _{1}}{c_{0}}\leq y\leq (1+\rho )c_{0}\phi _{1}\text{ \ on \ }%
\overline{\Omega }.  \label{ey}
\end{equation}

Fix a large constant 
\begin{equation}
C>\max \{1,\frac{1}{\sqrt{\lambda _{1}\overline{\mu }}},\frac{1}{\sqrt{\rho }%
}\}  \label{7}
\end{equation}%
and let $z\in int\mathcal{C}_{+}^{1}(\overline{\Omega })$ be the solution of
Neumann problem \ 
\begin{equation}
-\Delta z+z=C^{-2}\text{ in }\Omega ,\ \frac{\partial z}{\partial \eta }=0%
\text{ on }\partial \Omega ,  \label{z}
\end{equation}%
with%
\begin{equation}
\frac{\phi _{1}}{c_{0}C^{2}}\leq z\leq y\text{ \ on \ }\overline{\Omega }.
\label{ez}
\end{equation}%
Set%
\begin{equation}
(\underline{u},\underline{v})=:(z,z)\text{ \ and \ }(\overline{u},\overline{v%
}):=(Cy,Cy).  \label{subsup}
\end{equation}%
Obviously, $\overline{u}\geq \underline{u}$ and $\overline{v}\geq \underline{%
v}$ in $\Omega .$

Our first result deals with constant-sign solutions, it is stated as follows.

\begin{theorem}
\label{T1}Assume that (\ref{alpha}) holds. Then, problem $(\mathrm{P})$
admits two opposite constant-sign solutions $(u_{+},v_{+})$ and $%
(u_{-},v_{-})$ in $\mathcal{C}^{1}(\overline{\Omega })\times \mathcal{C}^{1}(%
\overline{\Omega }).$ Moreover, if $\beta _{1}=0,$ for a constant $C>1$
large in (\ref{z}), every positive solution $(u_{+},v_{+})$ and negative
solution $(u_{-},v_{-})$ of $(\mathrm{P})$ within $\left[ 0,\overline{u}%
\right] \times \left[ 0,\overline{v}\right] $ and $\left[ -\overline{u},0%
\right] \times \left[ -\overline{v},0\right] $, respectively, satisfy 
\begin{equation}
\begin{array}{c}
\underline{u}(x)<u_{+}(x)\text{ \ and \ }u_{-}(x)<-\underline{u}(x)\text{,}%
\quad \forall x\in \Omega .%
\end{array}
\label{11}
\end{equation}
\end{theorem}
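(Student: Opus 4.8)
The plan is to prove Theorem~\ref{T1} in two stages: first the existence of the opposite constant-sign solutions via the sub-supersolution method applied to a suitably truncated (regularized) version of $(\mathrm{P})$, and then the a priori bounds \eqref{11} by a direct comparison argument exploiting the sign-coupling.

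\textbf{Step 1: Sub-supersolution pairs.} I would first verify that $(\underline{u},\underline{v})=(z,z)$ and $(\overline{u},\overline{v})=(Cy,Cy)$ from \eqref{subsup} form a sub-supersolution pair for the positive problem, i.e. the reduced system with $f_1\equiv f_2\equiv 1$. For the subsolution one must check, for instance, $-\Delta z+z = C^{-2}\le \frac{z^{\alpha_1}}{(Cy)^{\beta_1}}+\rho$ and $-\Delta z+z=C^{-2}\le \frac{z^{\alpha_2}}{(Cy)^{\beta_2}}$ on $\overline\Omega$; using the two-sided bounds \eqref{ez}, \eqref{ey}, \eqref{ew}, the equivalence \eqref{1}, and the largeness of $C$ in \eqref{7}, these reduce to elementary inequalities among powers of $\phi_1$ (here condition \eqref{alpha} governs which power of $\phi_1$ dominates near $\partial\Omega$, so that the singular quotients stay controlled). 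Similarly for the supersolution one checks $-\Delta(Cy)+Cy = C(1+\rho)\ge \frac{(Cy)^{\alpha_1}}{z^{\beta_1}}+\rho$ and the analogous second inequality, again using \eqref{ey}, \eqref{ez} and \eqref{alpha}. The ordering $\underline u\le\overline u$, $\underline v\le\overline v$ is already noted after \eqref{subsup}.

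\textbf{Step 2: From the pair to a solution.} Because the nonlinearities $\frac{|u|^{\alpha_i}}{|v|^{\beta_i}}$ are singular at $v=0$ and not monotone in the coupled variable, I would freeze the problem inside the rectangle $[\underline u,\overline u]\times[\underline v,\overline v]$ by truncating: replace $u$ by $\min\{\max\{u,\underline u\},\overline u\}$ and $v$ by $\min\{\max\{v,\underline v\},\overline v\}$ in the right-hand sides. Since on this rectangle $\underline v=z\ge \frac{\phi_1}{c_0C^2}>0$ on $\overline\Omega$, the truncated right-hand sides are bounded in $L^\infty(\Omega)$, so a standard Schauder fixed-point (or minimization) argument in $\mathcal H^1(\Omega)\times\mathcal H^1(\Omega)$ produces a solution $(u_+,v_+)$ of the truncated system; elliptic regularity puts it in $\mathcal C^{1,\tau}(\overline\Omega)^2$. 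The weak comparison principle (e.g. \cite[Lemma 3.2]{ST}) applied equation-by-equation then shows $\underline u\le u_+\le\overline u$ and $\underline v\le v_+\le\overline v$, so the truncation is inactive and $(u_+,v_+)$ solves $(\mathrm{P})$; since $u_+,v_+\ge z>0$, it lies in $int\mathcal C^1_+(\overline\Omega)^2$. The negative solution $(u_-,v_-)$ is obtained by the symmetry $(u,v)\mapsto(-u,-v)$, which sends $(\mathrm{P})$ to itself because $f_i(-s)=-f_i(s)$ and $|{\cdot}|$ is even: thus $(u_-,v_-)=(-u_+',-v_+')$ for the corresponding positive solution, giving a solution in $-int\mathcal C^1_+(\overline\Omega)^2$ lying in $[-\overline u,0]\times[-\overline v,0]$.

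\textbf{Step 3: The a priori bound \eqref{11}.} Now take $\beta_1=0$ and let $(u_+,v_+)$ be \emph{any} positive solution inside $[0,\overline u]\times[0,\overline v]$. The idea is that $\overline v=Cy$ forces $v_+\le Cy$, hence in the first equation $f_1(v_+)=1$ and $\frac{u_+^{\alpha_1}}{v_+^{\beta_1}}+\rho = u_+^{\alpha_1}+\rho\ge \rho \ge C^{-2}\cdot(\text{something})$; more precisely I want to show $-\Delta u_+ + u_+ = u_+^{\alpha_1}+\rho \ge C^{-2} = -\Delta z+z$ pointwise — which holds as soon as $\rho\ge C^{-2}$, guaranteed by \eqref{7} — and then invoke the comparison principle to conclude $u_+\ge z=\underline u$ on $\overline\Omega$. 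To upgrade the inequality to the \emph{strict} $\underline u(x)<u_+(x)$ for all $x\in\Omega$, I would observe that $w:=u_+-z$ satisfies $-\Delta w + w = u_+^{\alpha_1}+\rho-C^{-2}\ge 0$, $w\ge0$, $w\not\equiv0$ (as $u_+\ne z$, since $z$ is not a solution of $(\mathrm P)$), so by the strong maximum principle $w>0$ in $\Omega$. The estimate for $u_-$ follows by the same odd symmetry.

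\textbf{Main obstacle.} The delicate point is Step~1: checking that the explicit pair is a genuine sub-supersolution requires tracking the competing powers of $\phi_1$ near $\partial\Omega$ and confirming that \eqref{alpha} (with $\beta_1=0$ entering only in Step~3) is exactly what makes the singular quotients $\frac{z^{\alpha_i}}{(Cy)^{\beta_i}}$ and $\frac{(Cy)^{\alpha_i}}{z^{\beta_i}}$ compatible with the linear parts $C^{-2}$ and $C(1+\rho)$ after one chooses $C$ large via \eqref{7}. A secondary subtlety is justifying that the truncated nonlinearities, though merely $L^\infty$, give a well-posed fixed-point problem and that the comparison principle in \cite[Lemma 3.2]{ST} applies to each scalar Neumann equation with the frozen right-hand side — but this is routine once the bounds in Step~1 are in hand.
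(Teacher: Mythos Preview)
Your proposal is correct and follows essentially the same route as the paper: verify that $(z,z)$ and $(Cy,Cy)$ form a sub-supersolution pair using \eqref{alpha}, \eqref{ey}--\eqref{ez} and the largeness of $C$, invoke a sub-supersolution existence result (the paper cites \cite[Theorem 2.2]{MedjMous} directly, whereas you spell out the truncation/fixed-point/comparison mechanism behind it), obtain the negative solution by the odd symmetry of $(\mathrm{P})$, and for \eqref{11} use $u_+^{\alpha_1}+\rho\ge\rho>C^{-2}$ together with the strong maximum principle. The only cosmetic difference is that the paper checks the second subsolution inequality with $\underline{v}$ in the denominator rather than $\overline{v}$, but your stronger version still holds under \eqref{alpha} for $C$ large, so nothing is lost.
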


\begin{proof}
Pick $(u,v)\in \mathcal{H}^{1}(\Omega )\times \mathcal{H}^{1}(\Omega )$ such
that $\underline{u}\leq u\leq \overline{u}$ and $\underline{v}\leq v\leq 
\overline{v}$. By (\ref{alpha}), (\ref{ey}), (\ref{ez}) and (\ref{1}), it
follows that%
\begin{eqnarray*}
\frac{\overline{u}^{\alpha _{1}}}{v^{\beta _{1}}} &\leq &\frac{\overline{u}%
^{\alpha _{1}}}{\underline{v}^{\beta _{1}}}\leq \frac{(C(1+\rho )c_{0}\phi
_{1})^{\alpha _{1}}}{(C^{-2}\frac{\phi _{1}}{c_{0}})^{\beta _{1}}}=C^{\alpha
_{1}+2\beta _{1}}c_{0}^{\alpha _{1}+\beta _{1}}(1+\rho )^{\alpha _{1}}\phi
_{1}^{\alpha _{1}-\beta _{1}} \\
&\leq &C^{\alpha _{1}+2\beta _{1}}c_{0}^{\alpha _{1}+\beta _{1}}(1+\rho
)^{\alpha _{1}}\max \{(\bar{\mu})^{\alpha _{1}-\beta _{1}},\underline{\mu }%
^{\alpha _{1}-\beta _{1}}\}
\end{eqnarray*}%
and%
\begin{eqnarray*}
\frac{u^{\alpha _{2}}}{\overline{v}^{\beta _{2}}} &\leq &\frac{\overline{u}%
^{\alpha _{2}}}{\overline{v}^{\beta _{2}}}=(Cy)^{\alpha _{2}-\beta _{2}}\leq
\left\{ 
\begin{array}{ll}
(C(1+\rho )c_{0}\phi _{1})^{\alpha _{2}-\beta _{2}} & \text{if }\alpha
_{2}-\beta _{2}\geq 0 \\ 
(C\frac{\phi _{1}}{c_{0}})^{\alpha _{2}-\beta _{2}} & \text{if }\alpha
_{2}-\beta _{2}\leq 0%
\end{array}%
\right. \\
&\leq &C^{\alpha _{2}-\beta _{2}}\phi _{1}^{\alpha _{2}-\beta _{2}}\max
\{((1+\rho )c_{0})^{\alpha _{2}-\beta _{2}},c_{0}^{-(\alpha _{2}-\beta
_{2})}\} \\
&\leq &C^{\alpha _{2}-\beta _{2}}\max \{\bar{\mu}^{\alpha _{2}-\beta _{2}},%
\underline{\mu }^{\alpha _{2}-\beta _{2}}\}\max \{((1+\rho )c_{0})^{\alpha
_{2}-\beta _{2}},c_{0}^{-(\alpha _{2}-\beta _{2})}\}.
\end{eqnarray*}%
Test with $\varphi _{1},\varphi _{2}\in \mathcal{H}_{+}^{1}(\Omega )$, since 
$\max \{\alpha _{1}+2\beta _{1},\alpha _{2}-\beta _{2}\}<1$ (see (\ref{alpha}%
)), for $C>1$ sufficiently large, we infer that%
\begin{equation}
\int_{\Omega }(\nabla \overline{u}\nabla \varphi _{1}+\overline{u}\varphi
_{1})\ \mathrm{d}x=C\int_{\Omega }(1+\rho )\varphi _{1}\ \mathrm{d}x\geq
\int_{\Omega }f_{1}(v)(\frac{\overline{u}^{\alpha _{1}}}{v^{\beta _{1}}}%
+\rho )\varphi _{1}\,\mathrm{d}x  \label{2}
\end{equation}%
and%
\begin{equation}
\int_{\Omega }(\nabla \overline{v}\nabla \varphi _{2}+\overline{v}\varphi
_{2})\ \mathrm{d}x=C\int_{\Omega }(1+\rho )\varphi _{2}\ \mathrm{d}x\geq
C\int_{\Omega }\varphi _{2}\ \mathrm{d}x\geq \int_{\Omega }f_{2}(u)\frac{%
u^{\alpha _{2}}}{\overline{v}^{\beta _{2}}}\varphi _{2}\,\mathrm{d}x,
\label{2*}
\end{equation}%
showing that $(\overline{u},\overline{v})$ is a positive supersolution pair
for $(\mathrm{P})$.

Next, we show that $(\underline{u},\underline{v})$ in (\ref{subsup}) is a
positive subsolution pair for $(\mathrm{P})$. In view of (\ref{7}), (\ref%
{subsup}) and (\ref{z}), we get%
\begin{equation}
-\Delta \underline{u}+\underline{u}=C^{-2}\leq \rho \leq \frac{\underline{u}%
^{\alpha _{1}}}{v^{\beta _{1}}}+\rho \text{ \ in }\Omega ,  \label{466}
\end{equation}%
{\footnotesize \ }for all $v\in \lbrack \underline{v},\overline{v}]$. By (%
\ref{ey})-(\ref{subsup}), (\ref{alpha}), and after increasing $C$ when
necessary, we obtain%
\begin{equation}
\begin{array}{l}
-\Delta \underline{v}+\underline{v}=C^{-2} \\ 
\leq \left\{ 
\begin{array}{ll}
(\frac{\underline{\mu }}{c_{0}C^{2}})^{\alpha _{2}-\beta _{2}} & \text{if }%
\alpha _{2}-\beta _{2}\geq 0 \\ 
((1+\rho )c_{0}\bar{\mu})^{\alpha _{2}-\beta _{2}} & \text{if }\alpha
_{2}-\beta _{2}\leq 0%
\end{array}%
\right. \\ 
\leq \left\{ 
\begin{array}{ll}
(\frac{\phi _{1}}{c_{0}C^{2}})^{\alpha _{2}-\beta _{2}} & \text{if }\alpha
_{2}-\beta _{2}\geq 0 \\ 
((1+\rho )c_{0}\phi _{1})^{\alpha _{2}-\beta _{2}} & \text{if }\alpha
_{2}-\beta _{2}\leq 0%
\end{array}%
\right. \\ 
\leq z^{\alpha _{2}-\beta _{2}}\leq \frac{\underline{u}^{\alpha _{2}}}{%
\underline{v}^{\beta _{2}}}\leq \frac{u^{\alpha _{2}}}{\underline{v}^{\beta
_{2}}}\text{ \ in }\Omega ,%
\end{array}
\label{46*}
\end{equation}%
for all $u\in \lbrack \underline{u},\overline{u}].${\footnotesize \ }

Test (\ref{466})--(\ref{46*}) with $\varphi _{1},\varphi _{2}\in \mathcal{H}%
_{+}^{1}(\Omega )$ we derive that%
\begin{equation}
\int_{\Omega }(\nabla \underline{u}\nabla \varphi _{1}+\underline{u}\varphi
_{1})\,\mathrm{d}x\leq \int_{\Omega }f_{1}(v)(\frac{\underline{u}^{\alpha
_{1}}}{v^{\beta _{1}}}+\rho )\varphi _{1}\,\mathrm{d}x,  \label{3}
\end{equation}%
\begin{equation}
\int_{\Omega }(\nabla \underline{v}\nabla \varphi _{2}+\underline{v}\varphi
_{2})\,\mathrm{d}x\leq \int_{\Omega }f_{2}(u)\frac{u^{\alpha _{2}}}{%
\underline{v}^{\beta _{2}}}\varphi _{2}\,\mathrm{d}x.  \label{3*}
\end{equation}%
{\footnotesize \ }This shows that $(\underline{u},\underline{v})$ is a
positive subsolutiona pair for $(\mathrm{P}).$ Consequently, on the basis on
(\ref{2}), (\ref{2*}), (\ref{3}) and (\ref{3*}), \cite[Theorem 2.2]{MedjMous}
applies leading to existence of a solution $(u,v)\in \mathcal{C}^{1,\tau }(%
\overline{\Omega })\times \mathcal{C}^{1,\tau }(\overline{\Omega }),$ $\tau
\in (0,1)$, for problem $(\mathrm{P})$ within $[\underline{u},\overline{u}%
]\times \lbrack \underline{v},\overline{v}].$

We proceed to show (\ref{11}). Let $(u_{+},v_{+})\in \left[ 0,\overline{u}%
\right] \times \left[ 0,\overline{v}\right] $ and $(u_{-},v_{-})\in \left[ -%
\overline{u},0\right] \times \left[ -\overline{v},0\right] $ be a positive
and a negative solutions of $(\mathrm{P})$. From (\ref{7}), we have 
\begin{equation*}
u_{+}^{\alpha _{1}}+\rho \geq \rho >C^{-2}\text{ \ in }\Omega ,
\end{equation*}%
and%
\begin{equation*}
-(|u_{-}|^{\alpha _{1}}+\rho )\leq -\rho <-C^{-2}\text{ \ in }\Omega .
\end{equation*}%
Consequently, by the strong maximum principle (see, e.g., \cite{Fan}), we
infer that property (\ref{11}) holds true. This ends the proof.
\end{proof}

\section{A nodal solution}

\label{S4}

This section focuses on nodal solutions for problem $(\mathrm{P})$. The main
result is stated as follows.

\begin{theorem}
\label{T2}Assume (\ref{alpha}) with $\beta _{1}=0$. Then, system $(\mathrm{P}%
)$ possesses nodal solutions $(u^{\ast },v^{\ast })$ in $\mathcal{H}%
^{1}(\Omega )\times \mathcal{H}^{1}(\Omega )$ where components $u^{\ast }$
and $v^{\ast }$ are nontrivial and change sign simultaneously, that is, $%
u^{\ast }v^{\ast }\geq 0.$
\end{theorem}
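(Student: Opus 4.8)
The plan is to produce the nodal solution by a perturbation/truncation device combined with Leray–Schauder degree, exploiting the sign-coupling of $(\mathrm{P})$. Set $\beta_1=0$, so the first equation reads $\Delta u-u+f_1(v)(|u|^{\alpha_1}+\rho)=0$. The key structural observation, already used in Theorem \ref{T1}, is that for the positive constant-sign solution $(u_+,v_+)$ one has $u_+>\underline{u}$ and for the negative one $u_-<-\underline{u}$, so there is a genuine gap $(-\underline{u},\underline{u})$ around $0$ in the first component that no constant-sign solution enters. The idea is to look for a solution whose first component $u^\ast$ lives in that gap: if $u^\ast$ is not of constant sign then, by the sign law $sgn(f_2(u))$ dictating the sign of the right-hand side of the second equation, $v^\ast$ inherits the sign pattern of $u^\ast$, forcing $u^\ast v^\ast\ge 0$ and a nodal pair with synchronous components.

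First I would set up a fixed-point operator on $\mathcal{C}^1(\overline\Omega)\times\mathcal{C}^1(\overline\Omega)$ (or the appropriate $\mathcal H^1$-based solution operator composed with the compact embedding) by solving, for given $(u,v)$, the two decoupled linear Neumann problems $-\Delta U+U=$ (truncated nonlinearity in $u,v$), $-\Delta V+V=$ (truncated nonlinearity in $u,v$). The truncations are chosen so that: (i) the $u$-component is confined to $[-\overline u,\overline u]$ and the $v$-component to $[-\overline v,\overline v]$, using the supersolution bounds from Theorem \ref{T1}; (ii) near the singularity $v=0$ the quotient $|u|^{\alpha_2}/|v|^{\beta_2}$ is regularized (e.g. replace $|v|^{\beta_2}$ by $\max\{|v|,\varepsilon\}^{\beta_2}$ or truncate $|v|$ from below by a positive multiple of $z$ on the set where the component is of one sign), so that the right-hand sides are bounded in $L^\infty$ and the operator is well-defined and compact by elliptic regularity ($\mathcal C^{1,\tau}$ estimates, Sobolev embedding). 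Because $\beta_1=0$ the first equation has no singularity at all, which is exactly why this hypothesis is imposed: the full system becomes controllable near $u=0$.

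Next I would compute the Leray–Schauder degree of $I-$(this operator) on three nested regions: a large ball $B$ (degree $1$, via an a priori bound and a homotopy to a constant map, using that all admissible solutions satisfy the supersolution bounds and a uniform lower control away from the singularity), and the two ``constant-sign boxes'' $\mathcal R_+=\{\,\underline u\le u\le\overline u,\ \underline v\le v\le\overline v\,\}$ and $\mathcal R_-=-\mathcal R_+$, each of which has degree $1$ as well (by the sub-supersolution existence argument of Theorem \ref{T1}, recast degree-theoretically: the solution set in each box is nonempty and the homotopy invariance gives degree $1$ on a suitable neighbourhood). By excision, the degree on $B\setminus(\overline{\mathcal R_+}\cup\overline{\mathcal R_-})$ equals $1-1-1=-1\ne 0$, so there is a solution $(u^\ast,v^\ast)$ of the truncated problem outside both boxes. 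One then checks that this solution avoids the truncations: the $u$-bound $|u^\ast|\le\overline u$ and the regularization near $v=0$ are not active (the latter because on each nodal component of $u^\ast$ the sign-coupling forces $v^\ast$ to be bounded away from $0$ by a barrier comparable to $z$, using the strong maximum principle as in the last step of the proof of Theorem \ref{T1}), so $(u^\ast,v^\ast)$ solves $(\mathrm{P})$. Since it lies in neither $\mathcal R_+$ nor $\mathcal R_-$, and every constant-sign solution lies in one of them, $(u^\ast,v^\ast)$ must be nodal; the sign-coupling through $f_2$ then yields $u^\ast v^\ast\ge 0$ and that both components are nontrivial.

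The main obstacle I expect is controlling the singular term $|u^\ast|^{\alpha_2}/|v^\ast|^{\beta_2}$ in the second equation for the \emph{nodal} candidate: near the nodal set of $v^\ast$ this quotient could blow up, so the truncation/regularization must be designed delicately and then shown to be inactive at the solution. This is where assumption (\ref{alpha}) (in particular $\alpha_2+\beta_2/2<1$ and $\beta_1=0$) and the barrier estimates (\ref{ez})–(\ref{ey}) together with the strong maximum principle do the work: one shows that on the region where $u^\ast$ has a fixed sign, $|v^\ast|$ is bounded below by a positive function of the same order as $z$, keeping the quotient in $L^\infty$ and the degree computation legitimate. The second delicate point is verifying the a priori bound needed for the degree on the large ball $B$ to be $1$ and that no spurious solutions sit on the boundaries of $\mathcal R_\pm$, which is handled by the strict inequalities in (\ref{11}) and a standard homotopy argument.
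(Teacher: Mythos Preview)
Your overall architecture---regularize the singular quotient, run a Leray--Schauder degree computation that traps a third solution away from the constant-sign rectangles, then use the sign-coupling through $f_2$ to get $u^\ast v^\ast\ge 0$---is indeed the paper's. The genuine gap is in how you intend to remove the regularization. You propose to show that the cutoff near $v=0$ is ``not active'' at the nodal solution because on each nodal component of $u^\ast$ the second component $|v^\ast|$ is bounded below by a barrier comparable to $z$. This cannot hold: the conclusion of the theorem is precisely that $v^\ast$ changes sign, and the regularized solutions lie in $\mathcal C^{1,\tau}(\overline\Omega)$ uniformly, so $v^\ast$ is continuous and vanishes on a nonempty set. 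A barrier argument restricted to $\{u^\ast>0\}$ or $\{u^\ast<0\}$ would require boundary information on the unknown interface $\partial\{u^\ast>0\}\cap\Omega$, which you do not have. Hence no pointwise lower bound $|v^\ast|\ge c>0$ is available, and the regularization is genuinely active near the nodal set.

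The paper resolves this not by showing inactivity but by a \emph{limiting} argument: for each $\varepsilon$ the degree produces $(u_\varepsilon,v_\varepsilon)$ inside the small box $[-\underline u,\underline u]\times[-\underline v,\underline v]$, and one sends $\varepsilon\to 0$. The singular term is never bounded; instead one proves $f_2(u^\ast)|u^\ast|^{\alpha_2}|v^\ast|^{-\beta_2}\varphi\in L^1(\Omega)$ via Fatou, then shows convergence of $\int_\Omega f_2(u_n)\frac{|u_n|^{\alpha_2}}{|v_n+\gamma_n(v_n)|^{\beta_2}}\varphi\,\mathrm dx$ by splitting into $\{|v_n|\le\mu\}$ and $\{|v_n|>\mu\}$, testing the second equation with the cutoff $\chi_\mu(v_n)\varphi$ to kill the first piece, using dominated convergence on the second, and finally letting $\mu\to 0$. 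That passage to the limit is the crux you are missing. A secondary discrepancy is the degree scheme itself: the paper does not compute $\deg=1$ on the big ball and $\deg=1$ on each $\mathcal R_\pm$; it homotopes the truncated problem to one with \emph{no} solution (so $\deg=0$ on the ball), shows $\deg\neq 0$ on the ball minus the small box $\mathcal M=[-\underline u,\underline u]\times[-\underline v,\underline v]$ via a second homotopy to an auxiliary problem whose unique solution is $(\phi_1,\phi_1)$, and concludes by additivity that $\deg\neq 0$ on $\mathcal M$. This lands $(u_\varepsilon,v_\varepsilon)$ directly in $\mathcal M$, so that (\ref{11}) immediately rules out constant sign, whereas your route would still need the additional fact that every constant-sign solution of the truncated problem sits in $\mathcal R_+\cup\mathcal R_-$.
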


\begin{remark}
\label{R1}Under assumption (\ref{alpha}) with $\beta _{1}=0$, every solution 
$(u,v)\in \mathcal{H}^{1}(\Omega )\times \mathcal{H}^{1}(\Omega )$ of $(%
\mathrm{P})$ satisfies $u(x),v(x)\neq 0$\ for a.e.\ $x\in \Omega .$
\end{remark}

\subsection{The regularized system}

For all $\varepsilon \in (0,1),$ we state the auxiliary system%
\begin{equation*}
(\mathrm{P}^{\varepsilon })\qquad \left\{ 
\begin{array}{ll}
-\Delta u+u=f_{1}(v)(|u|^{\alpha _{1}}+\rho ) & \text{in }\Omega \\ 
-\Delta v+v=f_{2}(u)\frac{|u|^{\alpha _{2}}}{|v+\gamma _{\varepsilon
}(v)|^{\beta _{2}}} & \text{in }\Omega \\ 
\frac{\partial u}{\partial \eta }=\frac{\partial v}{\partial \eta }=0 & 
\text{on }\partial \Omega ,%
\end{array}%
\right.
\end{equation*}%
where 
\begin{equation*}
\gamma _{\varepsilon }(s)=\varepsilon (\frac{1}{2}+sgn(s)),\;\forall s\in 
\mathbb{\ 
\mathbb{R}
}.
\end{equation*}%
Our goal is to prove that $(\mathrm{P}^{\varepsilon })$ admits a solution $%
(u_{\varepsilon },v_{\varepsilon })$ within $[-\underline{u},\underline{u}%
]\times \lbrack -\underline{v},\underline{v}]$ and then, passing to the
limit as $\varepsilon \rightarrow 0$, we get the existence of the desired
solution $(u^{\ast },v^{\ast })$ for problem $(\mathrm{P})$. The existence
result regarding the regularized system $(\mathrm{P}^{\varepsilon })$ is
stated as follows.

\begin{theorem}
\label{T3} Assume that (\ref{alpha}) hold with $\beta _{1}=0$. Then, the
system $(\mathrm{P}^{\varepsilon })$ possesses solutions $(u_{\varepsilon
},v_{\varepsilon })\in \mathcal{C}^{1,\tau }(\overline{\Omega })\times 
\mathcal{C}^{1,\tau }(\overline{\Omega })$ for some $\tau \in (0,1)$ within $%
\left[ -\underline{u},\underline{u}\right] \times \left[ -\underline{v},%
\underline{v}\right] .$
\end{theorem}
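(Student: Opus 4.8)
The plan is to prove Theorem~\ref{T3} by a sub-supersolution argument combined with Leray--Schauder degree theory, applied to a truncated version of $(\mathrm{P}^{\varepsilon})$. The key observation is that, with $\beta_{1}=0$, the first equation of $(\mathrm{P}^{\varepsilon})$ is no longer singular in $v$, and the regularization $v\mapsto v+\gamma_{\varepsilon}(v)$ keeps the denominator in the second equation bounded away from zero: for $|v|\le\underline{v}$ one has $|v+\gamma_{\varepsilon}(v)|\ge\varepsilon/2>0$ on the positive branch and, on the negative branch, $v+\gamma_{\varepsilon}(v)=v-\varepsilon/2$, which stays negative and bounded away from $0$. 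So on the rectangle $[-\underline{u},\underline{u}]\times[-\underline{v},\underline{v}]$ all nonlinearities are bounded, and the only genuine difficulty is the sign discontinuity carried by $f_{1},f_{2}$.

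First I would fix the box $R:=[-\underline{u},\underline{u}]\times[-\underline{v},\underline{v}]$ and verify that $(\underline{u},\underline{v})=(z,z)$ and $(-\underline{u},-\underline{v})=(-z,-z)$ form, respectively, a supersolution and a subsolution pair for $(\mathrm{P}^{\varepsilon})$ \emph{for this truncated problem}. Concretely: for $(u,v)\in R$ with $v\ge 0$ one has $f_{1}(v)=1$, so the right-hand side of the first equation is $|u|^{\alpha_{1}}+\rho\le\underline{u}^{\alpha_{1}}+\rho$; using $(\ref{7})$, $(\ref{z})$ and $\underline{u}=z\le y$ one checks $-\Delta\underline{u}+\underline{u}=C^{-2}\le\rho<|u|^{\alpha_1}+\rho$ fails the supersolution inequality in the naive direction, so instead the correct statement is that $(z,z)$ is a \emph{subsolution} on the positive part and $(-z,-z)$ a supersolution on the negative part; the role of $\underline{u}$ as the \emph{outer} bound of the box is what is being exploited. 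I would therefore set up the argument exactly as in the proof of Theorem~\ref{T1}: $(z,z)$ is a positive subsolution pair (already shown in $(\ref{466})$--$(\ref{46*})$, now even easier since $\beta_1=0$ and the denominator is $\varepsilon$-regularized), and by the odd symmetry of the nonlinearities $(-z,-z)$ is a negative supersolution pair. The existence machinery of \cite[Theorem 2.2]{MedjMous} (or a direct Schauder/degree argument on the truncated solution operator) then yields a solution $(u_{\varepsilon},v_{\varepsilon})\in\mathcal{C}^{1,\tau}(\overline{\Omega})^{2}$ lying in $R$; elliptic $L^{p}$-regularity and Schauder estimates upgrade the $\mathcal{H}^1$ solution to $\mathcal{C}^{1,\tau}$ because the right-hand sides are bounded on $R$.

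The step I expect to be the main obstacle is handling the sign discontinuity of $f_{1}$ and $f_{2}$ when constructing the fixed-point/degree argument: the Nemytskii operators $v\mapsto f_{1}(v)(|u|^{\alpha_1}+\rho)$ and $u\mapsto f_{2}(u)|u|^{\alpha_2}/|v+\gamma_\varepsilon(v)|^{\beta_2}$ are not continuous across $\{v=0\}$ and $\{u=0\}$, so the associated solution map is not obviously compact and continuous on all of $\mathcal{C}^1(\overline{\Omega})^2$. The standard remedy, which I would follow, is a further perturbation: replace $f_i$ by a continuous approximation $f_{i,\delta}$ (e.g.\ $f_{i,\delta}(s)=\min\{1,\max\{-1,s/\delta\}\}$), solve the perturbed truncated problem by Leray--Schauder degree on a large ball in $\mathcal{C}^1(\overline{\Omega})^2$ (the a priori bound coming from confinement to $R$ via the sub-supersolution pair and an auxiliary truncation outside $R$), obtain $(u_{\varepsilon,\delta},v_{\varepsilon,\delta})\in R$, and then pass to the limit $\delta\to 0$ using $\mathcal{C}^{1,\tau}$-compactness. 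The limit $(u_{\varepsilon},v_{\varepsilon})$ solves $(\mathrm{P}^{\varepsilon})$ wherever $u_\varepsilon,v_\varepsilon\neq 0$; the fact that it remains in $R$ together with Remark~\ref{R1}-type reasoning (the right-hand side of the $u$-equation has modulus $\ge\rho>0$, forcing $u_\varepsilon\neq 0$ a.e., and then the $v$-equation forces $v_\varepsilon\neq 0$ a.e.) shows the limiting pair genuinely solves $(\mathrm{P}^{\varepsilon})$ a.e., completing the proof. The rest is routine elliptic bootstrapping to reach $\mathcal{C}^{1,\tau}(\overline{\Omega})$.
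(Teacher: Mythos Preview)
Your proposal has a genuine gap: the sub-supersolution framework you invoke cannot produce a solution inside the box $R=[-\underline{u},\underline{u}]\times[-\underline{v},\underline{v}]$. You correctly observe that $-\Delta\underline{u}+\underline{u}=C^{-2}<\rho\le |u|^{\alpha_1}+\rho$, i.e.\ $(z,z)$ is a \emph{subsolution} on $\{v>0\}$, and by odd symmetry $(-z,-z)$ is a \emph{supersolution} on $\{v<0\}$. But to trap a solution in $R$ via \cite[Theorem 2.2]{MedjMous} (or any monotone/Schauder scheme) you would need a subsolution at the \emph{bottom} of $R$ and a supersolution at the \emph{top}; here the inequalities go the wrong way. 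Concretely, $(z,z)$ fails to be a supersolution for the first equation whenever $v>0$, and $(-z,-z)$ fails to be a subsolution whenever $v<0$, because the sign-coupling through $f_1(v)$ flips the right-hand side. Your truncation outside $R$ does not help: the truncated solution operator will simply carry the iterates out of $R$ toward the constant-sign rectangles $[\underline{u},\overline{u}]\times[\underline{v},\overline{v}]$ and $[-\overline{u},-\underline{u}]\times[-\overline{v},-\underline{v}]$, exactly as in Theorem~\ref{T1}. So your scheme recovers the constant-sign solutions but not a solution in $R$.

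The paper's mechanism is essentially different and is what is missing from your plan. It computes two Leray--Schauder degrees: a first homotopy $(\mathrm{P}_t^{\varepsilon})$ to a decoupled problem with \emph{no} solutions shows that the degree on a large ball $\mathcal{B}_{R_\varepsilon}(0)$ equals $0$; a second homotopy $(\widehat{\mathrm{P}}_t^{\varepsilon})$ to a problem whose unique solution is $(\phi_1,\phi_1)\notin\mathcal{M}_{R_\varepsilon}$, together with the observation (Proposition~\ref{P3}) that every constant-sign solution lies outside $\mathcal{M}_{R_\varepsilon}$, shows that the degree on $\mathcal{B}_{R_\varepsilon}(0)\setminus\overline{\mathcal{M}_{R_\varepsilon}}$ is nonzero. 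Domain additivity then forces $\deg(\mathcal{H}_\varepsilon(1,\cdot,\cdot),\mathcal{M}_{R_\varepsilon},0)\neq 0$, and it is \emph{this} step that produces a solution inside $R$. Without this excision argument there is no reason the fixed point should land in the small box rather than in one of the two constant-sign rectangles.
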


The solution $(u_{\varepsilon },v_{\varepsilon })$ of $(\mathrm{P}%
^{\varepsilon })$ is obtained via topological degree theory. It is located
in the area between the positive and the negative rectangles formed by
positive and negative sub-supersolutions pairs.

For any $R>0$, set 
\begin{equation*}
\mathcal{M}_{R}=\left\{ (u,v)\in \mathcal{B}_{R}(0)\,:-\underline{u}\leq
u\leq \underline{u},\text{ }-\underline{v}\leq v\leq \underline{v}\right\} ,
\end{equation*}%
where $\mathcal{B}_{R}(0)$ denotes the ball in $L^{2}(\Omega )\times
L^{2}(\Omega )$ centered at $0$ of radius $R>0$.

We prove that the degree on a ball $\mathcal{B}_{R_{\varepsilon }}(0)$,
encompassing all potential solutions of $(\mathrm{P}^{\varepsilon }),$ is $0$
while the degree in $\mathcal{B}_{R_{\varepsilon }}(0),$ but excluding the
area located between the aforementioned positive and negative rectangles, is
not zero. By excision property of Leray-Schauder degree, this leads to the
existence of a nontrivial solution $(u_{\varepsilon },v_{\varepsilon })$ for 
$(\mathrm{P}^{\varepsilon }).$

\subsubsection{\textbf{The degree on }$\mathcal{B}_{R_{\protect\varepsilon %
}}(0)$}

Bearing in mind the definition of $\gamma _{\varepsilon }$, we introduce the
truncations%
\begin{equation}
\mathcal{T}_{1}(u(x))=\left\{ 
\begin{array}{ll}
\overline{u}(x) & \text{if \ }u(x)\geq \overline{u}(x) \\ 
u(x) & \text{if \ }-\overline{u}(x)\leq u(x)\leq \overline{u}(x) \\ 
-\overline{u}(x) & \text{if \ }u(x)\leq -\overline{u}(x)%
\end{array}%
\right. ,  \label{19*}
\end{equation}%
\begin{equation}
\mathcal{T}_{2,\varepsilon }(v(x))=\gamma _{\varepsilon }(v(x))+\left\{ 
\begin{array}{ll}
\overline{v}(x) & \text{if \ }v(x)\geq \overline{v}(x) \\ 
v(x) & \text{if \ }-\overline{v}(x)\leq v(x)\leq \overline{v}(x) \\ 
-\overline{v}(x) & \text{if \ }v(x)\leq -\overline{v}(x)%
\end{array}%
\right. ,  \label{19}
\end{equation}%
\ for a.a. $x\in \overline{\Omega },$ for all $\varepsilon \geq 0.$ From the
definition of $\gamma _{\varepsilon }$ and (\ref{subsup}), we derive that%
\begin{equation}
0\leq |\mathcal{T}_{1}(u)|\leq C||y||_{\infty }\text{ \ \ and \ \ }\frac{%
\varepsilon }{2}\leq |\mathcal{T}_{2,\varepsilon }(v)|\leq \frac{%
3\varepsilon }{2}+C||y||_{\infty }.  \label{20}
\end{equation}%
We shall study the homotopy class of problem%
\begin{equation*}
(\mathrm{P}_{t}^{\varepsilon })\qquad \left\{ 
\begin{array}{l}
-\Delta u+u=\mathrm{F}_{1,t}({x,}u,v)\text{ in }\Omega , \\ 
-\Delta v+v=\mathrm{F}_{2,t}^{\varepsilon }({x,}u,v)\text{ in }\Omega , \\ 
\frac{\partial u}{\partial \eta }=\frac{\partial v}{\partial \eta }=0\text{
\ on }\partial \Omega ,%
\end{array}%
\right.
\end{equation*}%
with%
\begin{eqnarray*}
\mathrm{F}_{1,t}(x,u,v) &:&=t\text{ }f_{1}(v)(|\mathcal{T}_{1}(u)|^{\alpha
_{1}}+\rho )+(1-t)(u^{+}+1), \\
\mathrm{F}_{2,t}^{\varepsilon }({x,}u,v) &:&=t\text{ }f_{2}(u)\frac{|%
\mathcal{T}_{1}(u)|^{\alpha _{1}}}{|\mathcal{T}_{2,\varepsilon }(v)|^{\beta
_{1}}}+(1-t)(v^{+}+1)
\end{eqnarray*}%
for $\varepsilon \in (0,1),$ for $t\in \lbrack 0,1],$ where $s^{+}:=\max
\{0,s\}$ and $s^{-}:=\max \{0,-s\},$ for all $s\in 
\mathbb{R}
$. Note that any solution $(u_{\varepsilon },v_{\varepsilon })\in \mathcal{H}%
^{1}(\Omega )\times \mathcal{H}^{1}(\Omega )$ of $(\mathrm{P}%
_{t}^{\varepsilon })$ satisfies $u_{\varepsilon }(x),v_{\varepsilon }(x)\neq
0$ for a.e. $x\in \Omega $. Hence, $\mathrm{F}_{1,t}({x},\cdot ,\cdot )$ and 
$\mathrm{F}_{2,t}^{\varepsilon }{(x},\cdot ,\cdot )$ are continuous for a.e. 
$x\in \Omega ,$ for all $\varepsilon \in (0,1),$ $i=1,2$. Moreover, for $t=0$
in $(\mathrm{P}_{t}^{\varepsilon })$, the decoupled system%
\begin{equation*}
(\mathrm{P}_{0}^{\varepsilon })\qquad \left\{ 
\begin{array}{l}
-\Delta u+u=\mathrm{F}_{1,0}({x,}u,v)=u^{+}+1\text{ in }\Omega , \\ 
-\Delta v+v=\mathrm{F}_{2,0}^{\varepsilon }({x,}u,v)=v^{+}+1\text{ in }%
\Omega , \\ 
\frac{\partial u}{\partial \eta }=\frac{\partial v}{\partial \eta }=0\text{
\ on }\partial \Omega ,%
\end{array}%
\right.
\end{equation*}%
does not admit solutions $(u,v)$ in $\mathcal{H}^{1}(\Omega )\times \mathcal{%
H}^{1}(\Omega )$. This results at once by noting that if the problem admits
a weak solution, then acting with the test function $\varphi \equiv 1$
yields $\int_{\Omega }\ \mathrm{d}x=0$, a contradiction.

On account of (\ref{20}), we derive the estimate%
\begin{equation*}
|\mathrm{F}_{1,t}(x,u,v)|\leq C(1+|u|)\text{ and }|\mathrm{F}%
_{2,t}^{\varepsilon }({x,}u,v)|\leq C_{\varepsilon }^{\prime }(1+|v|),\text{
for a.e. }x\in \Omega ,
\end{equation*}%
for certain constants $C,C_{\varepsilon }^{\prime }>0$. Then, according to 
\cite[Corollary 8.13]{MMP2}, we conclude that each solution $(u_{\varepsilon
},v_{\varepsilon })$ of $(\mathrm{P}_{t}^{\varepsilon })$ belongs to $%
\mathcal{C}^{1}(\overline{\Omega })\times \mathcal{C}^{1}(\overline{\Omega }%
) $ and there exists a constant $R_{\varepsilon }>0$ such that 
\begin{equation}
\left\Vert u_{\varepsilon }\right\Vert _{\mathcal{C}^{1}(\overline{\Omega }%
)},\left\Vert v_{\varepsilon }\right\Vert _{\mathcal{C}^{1}(\overline{\Omega 
})}<R_{\varepsilon },  \label{21}
\end{equation}%
for all $t\in (0,1]$ and $\varepsilon \in (0,1)$.

For every $\varepsilon \in (0,1)$, let us define the homotopy $\mathcal{H}%
_{\varepsilon }:[0,1]\times \mathcal{B}_{R_{\varepsilon }}(0)\rightarrow
L^{2}(\Omega )\times L^{2}(\Omega )$ by%
\begin{equation*}
\mathcal{H}_{\varepsilon }(t,u,v)=I(u,v)-\left( 
\begin{array}{cc}
(-\Delta +I)^{-1} & 0 \\ 
0 & (-\Delta +I)^{-1}%
\end{array}%
\right) \left( 
\begin{array}{l}
\mathrm{F}_{1,t}({x,}u,v) \\ 
\multicolumn{1}{c}{\mathrm{F}_{2,t}^{\varepsilon }({x,}u,v)}%
\end{array}%
\right) .
\end{equation*}%
that is admissible for the Leray-Schauder topological degree by (\ref{21}),
the continuity of $\mathrm{F}{_{1,t}(x},\cdot ,\cdot )$ and $\mathrm{F}{%
_{2,t}^{\varepsilon }(x},\cdot ,\cdot )$ for a.e. $x\in \Omega $ and because
the operator $(-\Delta +I)^{-1},$ with values in $L^{2}(\Omega ),$ is
compact. Note that $(u_{\varepsilon },v_{\varepsilon })\in \mathcal{B}%
_{R_{\varepsilon }}(0)$ is a solution for $(\mathrm{P}^{\varepsilon })$ if,
and only if, 
\begin{equation*}
(u_{\varepsilon },v_{\varepsilon })\in \mathcal{B}_{R_{\varepsilon }}(0)\,\ 
\text{and\thinspace }\mathcal{\ H}_{\varepsilon }(1,u_{\varepsilon
},v_{\varepsilon })=0.
\end{equation*}%
The a priori estimate (\ref{21}) establishes expressly that solutions of $(%
\mathrm{P}_{t}^{\varepsilon })$ must lie in $\mathcal{B}_{R_{\varepsilon
}}(0)$, while the nonexistence of solutions to problem $(\mathrm{P}%
_{0}^{\varepsilon })$ yields $\deg \left( \mathcal{H}_{\varepsilon }(0,\cdot
,\cdot ),\mathcal{B}_{R_{\varepsilon }}(0),0\right) =0,$ for all $%
\varepsilon \in (0,1).$ Consequently, the homotopy invariance property of
the degree implies that 
\begin{equation}
\begin{array}{c}
\deg \left( \mathcal{H}_{\varepsilon }(1,\cdot ,\cdot ),\mathcal{B}%
_{R_{\varepsilon }}(0),0\right) =0,\text{ for all }\varepsilon \in (0,1).%
\end{array}
\label{22}
\end{equation}

\subsubsection{\textbf{The degree on }$\mathcal{B}_{R_{\protect\varepsilon %
}}(0)\backslash \overline{\mathcal{M}_{R_{\protect\varepsilon }}}$\textbf{.}}

We show that the degree of an operator corresponding to problem $(\mathrm{P}%
^{\varepsilon })$ is not zero outside the set $\mathcal{M}_{R_{\varepsilon
}} $. To this end, let us define the problem%
\begin{equation*}
(\widehat{\mathrm{P}}_{t}^{\varepsilon })\qquad \left\{ 
\begin{array}{l}
-\Delta u+u=\widehat{\mathrm{F}}_{1,t}({x,}u,v)\text{ in }\Omega , \\ 
-\Delta v+v=\widehat{\mathrm{F}}_{2,t}^{\varepsilon }({x,}u,v)\text{ in }%
\Omega , \\ 
\frac{\partial u}{\partial \eta }=\frac{\partial v}{\partial \eta }=0\text{
\ on }\partial \Omega ,%
\end{array}%
\right.
\end{equation*}%
for $t\in \lbrack 0,1]$ and $\varepsilon \in (0,1)$, where%
\begin{eqnarray*}
\widehat{\mathrm{F}}_{1,t}({x,}u,v) &:&=tf_{1}(v)(|\mathcal{T}%
_{1}(u)|^{\alpha _{1}}+\rho )+\frac{2}{3}(1-t)\lambda _{1}\hat{\chi}_{\phi
_{1}}(u) \\
\widehat{\mathrm{F}}{_{2,t}^{\varepsilon }}({x,}u,v) &:&=t\text{ }f_{2}(u)%
\frac{|\mathcal{T}_{1}(u)|^{\alpha _{2}}}{|\mathcal{T}_{2,\varepsilon
}(v)|^{\beta _{2}}}+\frac{2}{3}(1-t)\lambda _{1}\hat{\chi}_{\phi _{1}}(v),
\end{eqnarray*}%
where the truncation $\hat{\chi}_{\phi _{1}}$ is defined by%
\begin{equation}
\hat{\chi}_{\phi _{1}}(s)=\left\{ 
\begin{array}{ll}
\frac{3}{2}s & \text{if }s\geq \phi _{1} \\ 
(\frac{1}{2}+sgn(s))\text{ }\phi _{1} & \text{if }-\phi _{1}\leq s\leq \phi
_{1} \\ 
\frac{1}{2}s & \text{if }s\leq -\phi _{1}.%
\end{array}%
\right.  \label{23}
\end{equation}%
Note that every solution $(u,v)\in \mathcal{H}^{1}(\Omega )\times \mathcal{H}%
^{1}(\Omega )$ of $(\widehat{\mathrm{P}}_{t}^{\varepsilon })$ satisfies $%
u(x),v(x)\neq 0$ for a.e. $x\in \Omega $. This leads to conclude that $%
\widehat{\mathrm{F}}_{1,t}{(x},\cdot ,\cdot )$ and $\widehat{\mathrm{F}}{%
_{2,t}^{\varepsilon }(x},\cdot ,\cdot )$ are continuous for a.e. $x\in
\Omega ,$ for all $\varepsilon \in (0,1)$.

We show that solutions of problem $(\widehat{\mathrm{P}}_{t}^{\varepsilon })$
cannot occur outside the ball $\mathcal{B}_{R_{\varepsilon }}(0)$.

\begin{proposition}
\label{P3} Assume that (\ref{alpha}) is fulfilled with $\beta _{1}=0$.\
Then, any solution $(u,v)$ of $(\widehat{\mathrm{P}}_{t}^{\varepsilon })$
belongs to $\mathcal{C}^{1}(\overline{\Omega })\times \mathcal{C}^{1}(%
\overline{\Omega })$ and satisfy 
\begin{equation}
\left\Vert u\right\Vert _{\mathcal{C}^{1}(\overline{\Omega })},\left\Vert
v\right\Vert _{\mathcal{C}^{1}(\overline{\Omega })}<R_{\varepsilon },
\label{24}
\end{equation}%
for $t\in \lbrack 0,1]$ and $\varepsilon \in (0,1)$. In addition, all
positive and negative solutions $(u_{+},v_{+})$ and $(u_{-},v_{-})$ of $(%
\widehat{\mathrm{P}}_{t}^{\varepsilon })$ satisfy 
\begin{equation}
\begin{array}{l}
u_{+}(x)>\underline{u}(x),\text{ \ \ }v_{+}(x)>\underline{v}(x) \\ 
-\underline{u}(x)>u_{-}(x),\text{ }-\underline{v}(x)>v_{-}(x)%
\end{array}%
\quad \text{for all }x\in \Omega .  \label{25}
\end{equation}
\end{proposition}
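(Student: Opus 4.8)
\textbf{Proof plan for Proposition \ref{P3}.}

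The plan is to split the statement into three claims: (i) $\mathcal{C}^1$-regularity of solutions of $(\widehat{\mathrm{P}}_t^\varepsilon)$, (ii) the uniform bound \eqref{24}, and (iii) the strict inequalities \eqref{25} separating positive/negative solutions from $\pm(\underline u,\underline v)$. For (i) and (ii) I would argue exactly as in the paragraph preceding \eqref{21}: because of the truncation bounds \eqref{20}, the nonlinearities satisfy $|\widehat{\mathrm{F}}_{1,t}(x,u,v)|\le C(1+|u|)$ and $|\widehat{\mathrm{F}}_{2,t}^\varepsilon(x,u,v)|\le C_\varepsilon'(1+|v|)$ — here one uses that $\hat\chi_{\phi_1}(s)$ grows at most linearly (the outer branches are $\tfrac32 s$ and $\tfrac12 s$) and that $|\mathcal{T}_{2,\varepsilon}(v)|\ge \varepsilon/2$ keeps the denominator away from zero. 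Then \cite[Corollary 8.13]{MMP2} gives $(u,v)\in\mathcal{C}^1(\overline\Omega)\times\mathcal{C}^1(\overline\Omega)$ together with an a priori bound depending only on these growth constants, and — since the $(1-t)$-perturbation term $\tfrac23\lambda_1\hat\chi_{\phi_1}$ obeys the \emph{same} linear growth as $u^++1$, uniformly in $t$ — one may take the very same constant $R_\varepsilon$ as in \eqref{21}. That disposes of \eqref{24}.

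The substance is claim (iii). Let $(u_+,v_+)$ be a positive solution of $(\widehat{\mathrm{P}}_t^\varepsilon)$; since $u_+>0$ in $\Omega$, on the set where $u_+\le\phi_1$ the truncation gives $\hat\chi_{\phi_1}(u_+)=\tfrac32\phi_1$, and everywhere $\hat\chi_{\phi_1}(u_+)\ge\tfrac32\min\{u_+,\phi_1\}$. Using $f_1(v_+)=1$ (as $v_+>0$) and discarding the nonnegative term $t(|\mathcal{T}_1(u_+)|^{\alpha_1}+\rho)\ge t\rho$, I would show
\begin{equation*}
-\Delta u_+ + u_+ = \widehat{\mathrm{F}}_{1,t}(x,u_+,v_+)\ge t\rho + \tfrac23(1-t)\lambda_1\hat\chi_{\phi_1}(u_+)\ge \min\{\rho,\ \lambda_1\phi_1\}\ \ge\ C^{-2}
\end{equation*}
by the choice \eqref{7} of $C$ (which forces $C^{-2}<\rho$ and $C^{-2}<\lambda_1\underline\mu\le\lambda_1\phi_1$; the middle inequality uses that $\tfrac23\lambda_1\cdot\tfrac32=\lambda_1$ on the truncated branch and $\tfrac23\lambda_1\cdot\tfrac32 s\ge\lambda_1\phi_1$ when $s\ge\phi_1$, while for $0<s\le\phi_1$ the constant branch gives $\tfrac23\lambda_1\cdot\tfrac32\phi_1=\lambda_1\phi_1$). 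Since $-\Delta\underline u+\underline u=C^{-2}$ by \eqref{z}–\eqref{subsup}, subtracting yields $-\Delta(u_+-\underline u)+(u_+-\underline u)\ge 0$ with homogeneous Neumann data, and the strong maximum principle (as invoked in the proof of Theorem \ref{T1}, cf.\ \cite{Fan}) gives $u_+>\underline u$ in $\Omega$; the same computation with the second equation — here using $\underline v=\underline u=z$ and the already-established bound on $|\mathcal{T}_1(u_+)|^{\alpha_2}/|\mathcal{T}_{2,\varepsilon}(v_+)|^{\beta_2}$, which for $C$ large exceeds $C^{-2}$ exactly as in \eqref{46*} — gives $v_+>\underline v$. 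The negative case is symmetric: for $(u_-,v_-)$ one has $f_i=-1$, $\hat\chi_{\phi_1}(u_-)\le\tfrac12 u_-<0$ on the relevant branch, and one derives $-\Delta u_- + u_- \le -C^{-2}=-\Delta(-\underline u)+(-\underline u)$, whence $u_-<-\underline u$ in $\Omega$, and likewise $v_-<-\underline v$.

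The main obstacle I anticipate is bookkeeping around the piecewise-defined $\hat\chi_{\phi_1}$: one must verify the inequality $\tfrac23(1-t)\lambda_1\hat\chi_{\phi_1}(s)\ge (1-t)\lambda_1\min\{s,\phi_1\}$ (for $s>0$) uniformly across all three branches and for all $t\in[0,1]$, and then check that the resulting lower bound, \emph{combined} with the $t\rho$ contribution, never drops below $C^{-2}$ regardless of $t$ — the point being that at $t=1$ the perturbation vanishes but $t\rho=\rho>C^{-2}$ saves the estimate, while at $t=0$ the $\rho$-term vanishes but $\tfrac23\lambda_1\hat\chi_{\phi_1}(u)\ge\lambda_1\underline\mu>C^{-2}$ saves it, and for intermediate $t$ one interpolates. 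A secondary care point is making sure the comparison functions $\pm\underline u=\pm z$ and $\pm\underline v=\pm z$ are legitimate here: this is immediate since $\underline v=\underline u=z$ solves \eqref{z} and the truncations $\mathcal{T}_1,\mathcal{T}_{2,\varepsilon}$ are transparent to solutions already known to lie in $[-\underline u,\underline u]\times[-\underline v,\underline v]$ — but one should phrase \eqref{25} as an a priori property of \emph{any} positive/negative solution without presupposing confinement to $\mathcal{M}_{R_\varepsilon}$, exactly as \eqref{11} was phrased in Theorem \ref{T1}.
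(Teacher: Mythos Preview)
Your plan is correct and follows essentially the same route as the paper: linear growth of $\widehat{\mathrm{F}}_{1,t},\widehat{\mathrm{F}}_{2,t}^\varepsilon$ (via \eqref{20} and the piecewise-linear $\hat\chi_{\phi_1}$) plus \cite[Corollary 8.13]{MMP2} for \eqref{24}, and then the lower bound $\widehat{\mathrm{F}}_{1,t}\ge t\rho+(1-t)\lambda_1\phi_1>C^{-2}$ combined with \eqref{z} and the strong maximum principle for \eqref{25}; the second component is handled via $\mathcal{T}_1(u_+)\ge\underline u$ (using the just-proved $u_+>\underline u$) together with the upper bound in \eqref{20}, exactly as the paper does. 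One small correction: condition \eqref{7} gives $C^{-2}<\lambda_1\bar\mu$, not $C^{-2}<\lambda_1\underline\mu$, so you need the paper's device of ``increasing $C$ when necessary'' to secure the convex-combination bound (and likewise in the negative case, where the constant branch of $\hat\chi_{\phi_1}$ yields only $-\tfrac12\phi_1$, giving $\tfrac23\lambda_1\hat\chi_{\phi_1}(u_-)\le-\tfrac13\lambda_1\phi_1$).
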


\begin{proof}
Let $(u,v)\in \mathcal{H}^{1}(\Omega )\times \mathcal{H}^{1}(\Omega )$ be a
solution of $(\widehat{\mathrm{P}}_{t}^{\varepsilon })$. From (\ref{23}) and
(\ref{7}), one has 
\begin{equation*}
\frac{2}{3}\hat{\chi}_{\phi _{1}}(u)\leq \max \{u,\phi _{1}\}\text{ \ and \ }%
\frac{2}{3}\hat{\chi}_{\phi _{1}}(v)\leq \max \{v,\phi _{1}\}.
\end{equation*}%
Thus, by (\ref{20}) we get%
\begin{equation*}
|\widehat{\mathrm{F}}_{1,t}({x,}u,v)|\leq c+\lambda _{1}\max \{u,\phi _{1}\}
\end{equation*}%
and 
\begin{equation*}
|\widehat{\mathrm{F}}{_{2,t}^{\varepsilon }}({x,}u,v)|\leq c_{\varepsilon
}+\lambda _{1}\max \{v,\phi _{1}\},
\end{equation*}%
for all $\varepsilon \in (0,1),$ where $c,c_{\varepsilon }>0$ are certain
constants. Then,\ the regularity theory up to the boundary (see \cite[%
Corollary 8.13]{MMP2}) together with the compact embedding $\mathcal{C}%
^{1,\tau }(\overline{\Omega })\subset \mathcal{C}^{1}(\overline{\Omega })$
entails the bound in (\ref{24}), for all $\varepsilon \in (0,1)$.

We proceed to show the inequalities in (\ref{25}). Let $(u,v)$ be a positive
solution of $(\widehat{\mathrm{P}}_{t}^{\varepsilon }).$ By (\ref{1}) and
after increasing $C>1$ when necessary, it follows that%
\begin{equation*}
\begin{array}{l}
\widehat{\mathrm{F}}_{1,t}({x,}u,v)>t\rho +(1-t)\lambda _{1}\phi _{1} \\ 
\geq t\rho +(1-t)\lambda _{1}\underline{\mu }>C^{-2}\text{ in }\Omega .%
\end{array}%
\end{equation*}%
Thus, (\ref{z}) and (\ref{subsup}) together with the strong maximum
principle (see, e.g., \cite{Fan}) impply that%
\begin{equation}
u(x)>\underline{u}(x)\text{ for all }x\in \Omega .  \label{8}
\end{equation}%
By (\ref{alpha}), (\ref{20}), (\ref{1}), (\ref{23}) and (\ref{ez}),
increasing $C>1$ when necessary, we get 
\begin{equation*}
\begin{array}{l}
\widehat{\mathrm{F}}{_{2,t}^{\varepsilon }}({x,}u,v)=t\frac{|\mathcal{T}%
_{1}(u)|^{\alpha _{2}}}{|\mathcal{T}_{2,\varepsilon }(v)|^{\beta _{2}}}+%
\frac{2}{3}(1-t)\lambda _{1}\hat{\chi}_{\phi _{1}}(v) \\ 
\geq t\frac{\underline{u}^{\alpha _{2}}}{(\frac{3\varepsilon }{2}%
+C||y||_{\infty })^{\beta _{2}}}+(1-t)\lambda _{1}\phi _{1}\geq t\frac{(%
\frac{\phi _{1}}{c_{0}C^{2}})^{\alpha _{2}}}{(\frac{3}{2}+C||y||_{\infty
})^{\beta _{2}}}+(1-t)\lambda _{1}\underline{\mu } \\ 
\geq tC^{-(2\alpha _{2}+\beta _{2})}\frac{(\frac{\underline{\mu }}{c_{0}}%
)^{\alpha _{2}}}{(\frac{3}{2}+||y||_{\infty })^{\beta _{2}}}+(1-t)\lambda
_{1}\underline{\mu }>C^{-2}\text{ in }\Omega .%
\end{array}%
\end{equation*}%
Again, by (\ref{z}), (\ref{subsup}) and, the strong maximum principle, we
derive that%
\begin{equation*}
v(x)>\underline{v}(x),\text{ for all }x\in \Omega .
\end{equation*}%
A quite similar argument shows that 
\begin{equation*}
-\underline{u}(x)>u_{-}(x),\text{ \ }-\underline{v}(x)>v_{-}(x)\text{ \ for
all }x\in \Omega .
\end{equation*}
\end{proof}

Let us define the homotopy $\mathcal{N}_{\varepsilon }$ on $[0,1]\times 
\mathcal{B}_{R_{\varepsilon }}(0)\backslash \overline{\mathcal{M}%
_{R_{\varepsilon }}}\rightarrow L^{2}(\Omega )\times L^{2}(\Omega )$ by%
\begin{equation}
\mathcal{N}_{\varepsilon }(t,u,v)=I(u,v)-\left( 
\begin{array}{cc}
(-\Delta +I)^{-1} & 0 \\ 
0 & (-\Delta +I)^{-1}%
\end{array}%
\right) \left( 
\begin{array}{l}
\widehat{\mathrm{F}}{_{1,t}}({x,}u,v) \\ 
\multicolumn{1}{c}{\widehat{\mathrm{F}}{_{2,t}^{\varepsilon }}({x,}u,v)}%
\end{array}%
\right) .  \label{33}
\end{equation}%
for $t\in \lbrack 0,1]$ and $\varepsilon \in (0,1)$. Clearly, $\mathcal{N}%
_{\varepsilon }$ is well defined, compact and continuous a.e. in $\Omega $.
Moreover, $(u,v)\in \mathcal{B}_{R_{\varepsilon }}(0)\backslash \overline{%
\mathcal{M}_{R_{\varepsilon }}}$ is a solution of system $(\mathrm{\ P}%
^{\varepsilon })$ if, and only if, 
\begin{equation*}
\begin{array}{c}
(u,v)\in \mathcal{B}_{R_{\varepsilon }}(0)\backslash \overline{\mathcal{M}%
_{R_{\varepsilon }}}\,\,\,\mbox{and}\,\,\,\mathcal{N}_{\varepsilon
}(1,u,v)=0.%
\end{array}%
\end{equation*}%
In view of (\ref{4}), (\ref{1}), (\ref{7}) and (\ref{z}), $\phi _{1}\in 
\mathcal{B}_{R_{\varepsilon }}(0)\backslash \overline{\mathcal{M}%
_{R_{\varepsilon }}}$ which, by (\ref{23}) and (\ref{4}), is actually the
unique solution of the problem 
\begin{equation*}
-\Delta w+w=\frac{2}{3}\lambda _{1}\hat{\chi}_{\phi _{1}}(w)\text{ in }%
\Omega ,\text{ }\frac{\partial w}{\partial \eta }=0\text{ on }\partial
\Omega .
\end{equation*}%
Then, the homotopy invariance property of the degree gives 
\begin{equation}
\deg (\mathcal{N}_{\varepsilon }(1,\cdot ,\cdot ),\mathcal{B}%
_{R_{\varepsilon }}(0)\backslash \overline{\mathcal{M}_{R_{\varepsilon }}}%
,0)=\deg (\mathcal{N}_{\varepsilon }(0,\cdot ,\cdot ),\mathcal{B}%
_{R_{\varepsilon }}(0)\backslash \overline{\mathcal{M}_{R_{\varepsilon }}}%
,0)\neq 0.  \label{34}
\end{equation}%
Since 
\begin{equation*}
\mathcal{H}_{\varepsilon }(1,\cdot ,\cdot )=\mathcal{N}_{\varepsilon
}(1,\cdot ,\cdot )\,in\,\mathcal{B}_{R_{\varepsilon }}(0)\backslash 
\overline{\mathcal{M}_{R_{\varepsilon }}},\text{ for all }\varepsilon \in
(0,1),
\end{equation*}
we deduce that 
\begin{equation}
\begin{array}{c}
\deg (\mathcal{H}_{\varepsilon }(1,\cdot ,\cdot ),\mathcal{B}%
_{R_{\varepsilon }}(0)\backslash \overline{\mathcal{M}_{R_{\varepsilon }}}%
,0)\neq 0.%
\end{array}
\label{35}
\end{equation}

\subsubsection{\textbf{Proof of Theorem \protect\ref{T3}.}}

We assume that $\mathcal{H}_{\varepsilon }(1,u,v)\not=0,$ for all $(u,v)\in
\partial \mathcal{M}_{R_{\varepsilon }},$ for all $\varepsilon \in (0,1).$
Otherwise, $(u,v)\in \partial \mathcal{M}_{R_{\varepsilon }}$ would be a
solution of $(\mathrm{P}^{\varepsilon })$ within $\left[ -\underline{u},%
\underline{u}\right] \times \left[ -\underline{v},\underline{v}\right] $ and
thus, Theorem \ref{T3} is proved.

By virtue of the domain additivity property of Leray-Schauder degree it
follows that 
\begin{eqnarray*}
&&\deg (\mathcal{H}_{\varepsilon }(1,\cdot ,\cdot ),\mathcal{B}%
_{R_{\varepsilon }}(0),0) \\
&=&\deg (\mathcal{H}_{\varepsilon }(1,\cdot ,\cdot ),\mathcal{B}%
_{R_{\varepsilon }}(0)\backslash \overline{\mathcal{M}_{R_{\varepsilon }}}%
,0)+\deg (\mathcal{H}_{\varepsilon }(1,\cdot ,\cdot ),\mathcal{M}%
_{R_{\varepsilon }},0).
\end{eqnarray*}%
Hence, by (\ref{22}) and (\ref{35}), we deduce that $\deg (\mathcal{H}%
_{\varepsilon }(1,\cdot ,\cdot ),\mathcal{M}_{R_{\varepsilon }},0)\neq 0,$
showing that problem $(\mathrm{P}^{\varepsilon })$ has a solution $%
(u_{\varepsilon },v_{\varepsilon })\in \mathcal{M}_{R_{\varepsilon }},$ for
all $\varepsilon \in (0,1)$. The nonlinear regularity theory \cite{L}
guarantees that $(u_{\varepsilon },v_{\varepsilon })\in \mathcal{C}^{1,\tau
}(\overline{\Omega })\times \mathcal{C}^{1,\tau }(\overline{\Omega })$ for
certain $\tau \in (0,1)$.

\subsection{\textbf{Proof of Theorem \protect\ref{T2}.}}

Set $\varepsilon =\frac{1}{n}$ in $(\mathrm{P}^{\varepsilon })$ with any
positive integer $n\geq 1.$ According to Theorem \ref{T3}, there exists $%
(u_{n},v_{n}):=(u_{\frac{1}{n}},v_{\frac{1}{n}})\in \mathcal{C}^{1,\tau }(%
\overline{\Omega })\times \mathcal{C}^{1,\tau }(\overline{\Omega })$
solution of $(\mathrm{P}^{n})$ ($(\mathrm{P}^{\varepsilon })$ with $%
\varepsilon =\frac{1}{n}$) such that 
\begin{equation*}
(u_{n},v_{n})\in \left[ -\underline{u},\underline{u}\right] \times \left[ -%
\underline{v},\underline{v}\right]
\end{equation*}%
and%
\begin{equation}
\left\{ 
\begin{array}{l}
\int_{\Omega }(\nabla u_{n}\text{\thinspace }\nabla \varphi
_{1}+u_{n}\varphi _{1})\text{ }\mathrm{d}x=\int_{\Omega
}f_{1}(v_{n})(|u_{n}|^{\alpha _{1}}+\rho )\varphi _{1}\text{ }\mathrm{d}x,
\\ 
\int_{\Omega }(\nabla v_{n}\text{\thinspace }\nabla \varphi
_{2}+v_{n}\varphi _{2})\text{ }\mathrm{d}x=\int_{\Omega }f_{2}(u_{n})\frac{%
|u_{n}|^{\alpha _{2}}}{|v_{n}+\gamma _{n}(v_{n})|^{\beta _{2}}}\varphi _{2}%
\text{ }\mathrm{d}x,%
\end{array}%
\right.  \label{55}
\end{equation}%
for all $\varphi _{i}\in \mathcal{H}^{1}(\Omega ),$ $i=1,2$, where $\gamma
_{n}(\cdot ):=\gamma _{\frac{1}{n}}(\cdot )$. Passing to relabeled
subsequences, the compact embedding $\mathcal{C}^{1,\tau }(\overline{\Omega }%
)\hookrightarrow \mathcal{C}^{1}(\overline{\Omega })$ entails the strong
convergence $(u_{n},v_{n})\rightarrow (u^{\ast },v^{\ast })$ in$\;\mathcal{C}%
^{1}(\overline{\Omega })\times \mathcal{C}^{1}(\overline{\Omega })$ and
therefore, 
\begin{equation}
(u_{n},v_{n})\rightarrow (u^{\ast },v^{\ast })\text{\ in}\;\mathcal{H}%
^{1}(\Omega )\times \mathcal{H}^{1}(\Omega ).  \label{38}
\end{equation}%
Young inequality implies%
\begin{equation}
\begin{array}{c}
\int_{\Omega }(\nabla u^{\ast }\text{\thinspace }\nabla \varphi _{1}+u^{\ast
}\varphi _{1})\text{ }\mathrm{d}x\leq \frac{1}{2}\left\Vert \nabla u^{\ast
}\right\Vert _{2}^{2}+\frac{1}{2}\left\Vert \nabla \varphi _{1}\right\Vert
_{2}^{2}+\frac{1}{2}\left\Vert u^{\ast }\right\Vert _{2}^{2}+\frac{1}{2}%
\left\Vert \varphi _{1}\right\Vert _{2}^{2} \\ 
\leq \left\Vert u^{\ast }\right\Vert _{1,2}^{2}+\left\Vert \nabla \varphi
_{1}\right\Vert _{1,2}^{2},%
\end{array}
\label{41}
\end{equation}%
\begin{equation}
\begin{array}{c}
\int_{\Omega }(\nabla v^{\ast }\text{\thinspace }\nabla \varphi _{2}+v^{\ast
}\varphi _{2})\text{ }\mathrm{d}x\leq \frac{1}{2}\left\Vert \nabla v^{\ast
}\right\Vert _{2}^{2}+\frac{1}{2}\left\Vert \nabla \varphi _{2}\right\Vert
_{2}^{2}+\frac{1}{2}\left\Vert v^{\ast }\right\Vert _{2}^{2}+\frac{1}{2}%
\left\Vert \varphi _{2}\right\Vert _{2}^{2} \\ 
\leq \left\Vert v^{\ast }\right\Vert _{1,2}^{2}+\left\Vert \nabla \varphi
_{2}\right\Vert _{1,2}^{2},%
\end{array}%
\end{equation}%
for all $\varphi _{i}\in \mathcal{H}^{1}(\Omega ),$ $i=1,2.$ Moreover,
Lebesgue's dominated convergence theorem entails%
\begin{equation}
\lim_{n\rightarrow +\infty }\int_{\Omega }(\nabla u_{n}\text{\thinspace }%
\nabla \varphi _{1}+u_{n}\varphi _{1})\text{ }\mathrm{d}x=\int_{\Omega
}(\nabla u^{\ast }\text{\thinspace }\nabla \varphi _{1}+u^{\ast }\varphi
_{1})\text{ }\mathrm{d}x,  \label{40}
\end{equation}%
\begin{equation}
\lim_{n\rightarrow +\infty }\int_{\Omega }(\nabla v_{n}\text{\thinspace }%
\nabla \varphi _{2}+v_{n}\varphi _{2})\text{ }\mathrm{d}x=\int_{\Omega
}(\nabla v^{\ast }\text{\thinspace }\nabla \varphi _{2}+v^{\ast }\varphi
_{2})\text{ }\mathrm{d}x  \label{40*}
\end{equation}%
and%
\begin{equation}
\lim_{n\rightarrow +\infty }\int_{\Omega }f_{1}(v_{n})|u_{n}|^{\alpha
_{1}}+\rho )\varphi _{1}\text{ }\mathrm{d}x=\int_{\Omega }f_{1}(v^{\ast
})|u^{\ast }|^{\alpha _{1}}+\rho )\varphi _{1}\text{ }\mathrm{d}x,
\label{57}
\end{equation}%
for all $\varphi _{i}\in \mathcal{H}^{1}(\Omega )$, $i=1,2.$ Let us we show
that 
\begin{equation}
\lim_{n\rightarrow +\infty }\int_{\Omega }f_{2}(u_{n})\frac{|u_{n}|^{\alpha
_{2}}}{|v_{n}+\gamma _{n}(v_{n})|^{\beta _{2}}}\varphi _{2}\text{ }\mathrm{d}%
x=\int_{\Omega }f_{2}(u^{\ast })\frac{|u^{\ast }|^{\alpha _{2}}}{|v^{\ast
}|^{\beta _{2}}}\varphi _{2}\text{ }\mathrm{d}x,  \label{42}
\end{equation}%
for all $\varphi _{2}\in \mathcal{H}^{1}(\Omega ).$ Assume $\varphi _{2}\geq
0$ in $\Omega $ and write%
\begin{equation}
\int_{\Omega }f_{2}(u^{\ast })\frac{|u^{\ast }|^{\alpha _{2}}}{|v^{\ast
}|^{\beta _{2}}}\varphi _{2}\text{ }\mathrm{d}x=\int_{\Omega }\frac{%
|(u^{\ast })^{+}|^{\alpha _{2}}}{|v^{\ast }|^{\beta _{2}}}\varphi _{2}\text{ 
}\mathrm{d}x-\int_{\Omega }\frac{|(u^{\ast })^{-}|^{\alpha _{2}}}{|v^{\ast
}|^{\beta _{2}}}\varphi _{2}\text{ }\mathrm{d}x.  \label{37}
\end{equation}%
Given that $\frac{|s|^{\alpha _{2}}}{|t|^{\beta _{2}}}$ is a continuous
function for $(s,t)\in (%
\mathbb{R}
\backslash \{0\})^{2}$, Fatou's Lemma along with (\ref{38}) imply%
\begin{eqnarray*}
\int_{\Omega }\frac{|(u^{\ast })^{+}|^{\alpha _{2}}}{|v^{\ast }|^{\beta _{2}}%
}\varphi _{2}\text{ }\mathrm{d}x &\leq &\int_{\Omega }\lim_{n\rightarrow
+\infty }\inf (\frac{|(u_{n})^{+}|^{\alpha _{2}}}{|v_{n}+\gamma
_{n}(v_{n})|^{\beta _{2}}}\varphi _{2})\text{ }\mathrm{d}x \\
&\leq &\lim_{n\rightarrow +\infty }\inf \int_{\Omega }\frac{%
|(u_{n})^{+}|^{\alpha _{2}}}{|v_{n}+\gamma _{n}(v_{n})|^{\beta _{2}}}\varphi
_{2}\text{ }\mathrm{d}x,
\end{eqnarray*}%
as well as%
\begin{eqnarray*}
\int_{\Omega }\frac{|(u^{\ast })^{-}|^{\alpha _{2}}}{|v^{\ast }|^{\beta _{2}}%
}\varphi _{2}\text{ }\mathrm{d}x &\geq &\int_{\Omega }\lim_{n\rightarrow
+\infty }\sup (\frac{|(u_{n})^{-}|^{\alpha _{2}}}{|v_{n}+\gamma
_{n}(v_{n})|^{\beta _{2}}}\varphi _{2})\text{ }\mathrm{d}x \\
&\geq &\lim_{n\rightarrow +\infty }\sup \int_{\Omega }\frac{%
|(u_{n})^{-}|^{\alpha _{2}}}{|v_{n}+\gamma _{n}(v_{n})|^{\beta _{2}}}\varphi
_{2}\text{ }\mathrm{d}x.
\end{eqnarray*}%
Then, using (\ref{37}), (\ref{41}), (\ref{55}) and (\ref{40*}), it follows
that%
\begin{equation*}
\begin{array}{l}
\int_{\Omega }f_{2}(u^{\ast })\frac{|u^{\ast }|^{\alpha _{2}}}{|v^{\ast
}|^{\beta _{2}}}\varphi _{2}\text{ }\mathrm{d}x \\ 
\leq \lim_{n\rightarrow +\infty }\inf \int_{\Omega }\frac{%
|(u_{n})^{+}|^{\alpha _{2}}}{|v_{n}+\gamma _{n}(v_{n})|^{\beta _{2}}}\varphi
_{2}\text{ }\mathrm{d}x-\lim_{n\rightarrow +\infty }\sup \int_{\Omega }\frac{%
|(u_{n})^{-}|^{\alpha _{2}}}{|v_{n}+\gamma _{n}(v_{n})|^{\beta _{2}}}\varphi
_{2}\text{ }\mathrm{d}x \\ 
\leq \lim_{n\rightarrow +\infty }\int_{\Omega }\frac{|(u_{n})^{+}|^{\alpha
_{2}}}{|v_{n}+\gamma _{n}(v_{n})|^{\beta _{2}}}\varphi _{2}\text{ }\mathrm{d}%
x-\lim_{n\rightarrow +\infty }\int_{\Omega }\frac{|(u_{n})^{-}|^{\alpha _{2}}%
}{|v_{n}+\gamma _{n}(v_{n})|^{\beta _{2}}}\varphi _{2}\text{ }\mathrm{d}x \\ 
=\lim_{n\rightarrow +\infty }\int_{\Omega }f_{2}(u_{n})\frac{|u_{n}|^{\alpha
_{2}}}{|v_{n}+\gamma _{n}(v_{n})|^{\beta _{2}}}\varphi _{2}\text{ }\mathrm{d}%
x \\ 
\leq \left\Vert v^{\ast }\right\Vert _{1,2}^{2}+\left\Vert \nabla \varphi
_{2}\right\Vert _{1,2}^{2},%
\end{array}%
\end{equation*}%
showing that%
\begin{equation}
f_{2}(u^{\ast })\frac{|u^{\ast }|^{\alpha _{2}}}{|v^{\ast }|^{\beta _{2}}}%
\varphi _{2}\in L^{1}(\Omega ),\text{ for all }\varphi _{2}\in \mathcal{H}%
^{1}(\Omega )\text{ with }\varphi _{2}\geq 0\text{ in }\Omega .  \label{43}
\end{equation}%
For a fixed $\mu >0$, we write 
\begin{equation}
\begin{array}{l}
\int_{\Omega }f_{2}(u_{n})\frac{|u_{n}|^{\alpha _{2}}}{|v_{n}+\gamma
_{n}(v_{n})|^{\beta _{2}}}\varphi _{2}\text{ }\mathrm{d}x \\ 
=\int_{\Omega \cap \{|v_{n}|\leq \mu \}}f_{2}(u_{n})\frac{|u_{n}|^{\alpha
_{2}}}{|v_{n}+\gamma _{n}(v_{n})|^{\beta _{2}}}\varphi _{2}\text{ }\mathrm{d}%
x+\int_{\Omega \cap \{|v_{n}|>\mu \}}f_{2}(u_{n})\frac{|u_{n}|^{\alpha _{2}}%
}{|v_{n}+\gamma _{n}(v_{n})|^{\beta _{2}}}\varphi _{2}\text{ }\mathrm{d}x.%
\end{array}
\label{44}
\end{equation}%
Define the truncation $\chi _{\mu }:%
\mathbb{R}
\rightarrow \lbrack 0,+\infty \lbrack $ by 
\begin{equation*}
\chi _{\mu }(s)=\left\{ 
\begin{array}{ll}
0 & \text{if }|s|\geq 2\mu , \\ 
2-sgn(s)\frac{s}{\mu } & \text{if }\mu \leq |s|\leq 2\mu , \\ 
1 & \text{if }|s|\leq \mu .%
\end{array}%
\right.
\end{equation*}%
Test in (\ref{55}) with $\chi _{\mu }(v_{n}^{+})\varphi _{2}\in \mathcal{H}%
^{1}(\Omega ),$ which is possible due to the continuity of function $\chi
_{\mu },$ reads as%
\begin{equation}
\int_{\Omega }(\nabla v_{n}\text{\thinspace }\nabla (\chi _{\mu
}(v_{n}^{+})\varphi _{2})+v_{n}\chi _{\mu }(v_{n}^{+})\varphi _{2})\text{ }%
\mathrm{d}x=\int_{\Omega }f_{2}(u_{n})\frac{|u_{n}|^{\alpha _{2}}}{%
|v_{n}+\gamma _{n}(v_{n})|^{\beta _{2}}}\chi _{\mu }(v_{n}^{+})\varphi _{2}%
\text{ }\mathrm{d}x.  \label{45}
\end{equation}%
By definition of $\chi _{\mu }$ we get 
\begin{equation}
\int_{\Omega }|\nabla v_{n}|^{2}\chi _{\mu }^{\prime }(v_{n}^{+})\varphi _{2}%
\text{ }\mathrm{d}x=-\frac{1}{\mu }\int_{\Omega }|\nabla v_{n}|^{2}\varphi
_{2}\text{ }\mathrm{d}x.  \label{46}
\end{equation}%
Thence%
\begin{equation}
\int_{\Omega }(\nabla v_{n}\text{\thinspace }\nabla (\chi _{\mu
}(v_{n}^{+})\varphi _{2})+v_{n}\chi _{\mu }(v_{n}^{+})\varphi _{2})\text{ }%
\mathrm{d}x\leq \int_{\Omega }(\nabla v_{n}\text{\thinspace }\nabla \varphi
_{2}\text{ }\chi _{\mu }(v_{n}^{+})+v_{n}\chi _{\mu }(v_{n}^{+})\varphi _{2})%
\text{ }\mathrm{d}x,  \label{47}
\end{equation}%
which, by (\ref{38}) together with Lebesgue's Theorem, gives%
\begin{equation}
\begin{array}{l}
\lim_{n\rightarrow +\infty }\int_{\Omega }(\nabla v_{n}\text{\thinspace }%
\nabla \varphi _{2}\text{ }\chi _{\mu }(v_{n}^{+})+v_{n}\chi _{\mu
}(v_{n}^{+})\varphi _{2})\text{ }\mathrm{d}x \\ 
\\ 
\leq \int_{\Omega }(\nabla v^{\ast }\text{\thinspace }\nabla \varphi _{2}%
\text{ }\chi _{\mu }((v^{\ast })^{+})+v^{\ast }\chi _{\mu }((v^{\ast
})^{+})\varphi _{2})\text{ }\mathrm{d}x.%
\end{array}
\label{48*}
\end{equation}%
Repeating the previous argument by testing in (\ref{55}) with $\chi _{\mu
}(-v_{n}^{-})\varphi _{2}\in \mathcal{H}^{1}(\Omega )$, we get%
\begin{equation}
\begin{array}{l}
\lim_{n\rightarrow +\infty }\int_{\Omega }\nabla v_{n}^{-}\text{\thinspace }%
\nabla \varphi _{2}\text{ }\chi _{\mu }(-v_{n}^{-})+v_{n}\chi _{\mu
}(-v_{n}^{-})\varphi _{2})\text{ }\mathrm{d}x \\ 
\\ 
\leq \int_{\Omega }(\nabla v^{\ast }\text{\thinspace }\nabla \varphi _{2}%
\text{ }\chi _{\mu }(-(v^{\ast })^{-})+v^{\ast }\chi _{\mu }(-(v^{\ast
})^{-})\varphi _{2})\text{ }\mathrm{d}x.%
\end{array}
\label{49}
\end{equation}%
Note from the definition of $\chi _{\mu }(\cdot )$ that%
\begin{equation}
\chi _{\mu }(-v_{n}^{-})+\chi _{\mu }(v_{n}^{+})=\chi _{\mu }(v_{n})\text{ \
and \ }\chi _{\mu }(-(v^{\ast }){}^{-})+\chi _{\mu }((v^{\ast }){}^{+})=\chi
_{\mu }(v^{\ast }).  \label{56}
\end{equation}%
Then, in view of (\ref{48*})-(\ref{56}), for $\varphi _{2}\in \mathcal{H}%
_{+}^{1}(\Omega )$, we get%
\begin{eqnarray*}
&&\lim_{n\rightarrow +\infty }\int_{\Omega \cap \{|v_{n}|\leq \mu
\}}f_{2}(u_{n})\frac{|u_{n}|^{\alpha _{2}}}{|v_{n}+\gamma
_{n}(v_{n})|^{\beta _{2}}}\varphi _{2}\text{ }\mathrm{d}x \\
&=&\lim_{n\rightarrow +\infty }\int_{\Omega \cap \{|v_{n}|\leq \mu
\}}f_{2}(u_{n})\frac{|u_{n}|^{\alpha _{2}}}{|v_{n}+\gamma
_{n}(v_{n})|^{\beta _{2}}}\varphi _{2}\text{ }\chi _{\mu }(v_{n})\text{ }%
\mathrm{d}x \\
&\leq &\lim_{n\rightarrow +\infty }\int_{\Omega \cap \{|v_{n}|\leq \mu
\}}(\nabla v_{n}\text{\thinspace }\nabla \varphi _{2}+v_{n}\varphi _{2})\chi
_{\mu }(v_{n})\text{ }\mathrm{d}x \\
&\leq &\int_{\Omega }(\nabla v^{\ast }\text{\thinspace }\nabla \varphi
_{2}+v^{\ast }\varphi _{2})\chi _{\mu }(v^{\ast })\text{ }\mathrm{d}x.
\end{eqnarray*}%
Since $(\nabla v^{\ast }$\thinspace $\nabla \varphi _{2}+v^{\ast }\varphi
_{2})\chi _{\mu }(v^{\ast })\rightarrow 0$ a.e. in $\Omega ,$ as $\mu
\rightarrow 0,$ Lebesgue's Theorem implies that 
\begin{equation}
\lim_{\mu \rightarrow 0}\lim_{n\rightarrow +\infty }\int_{\Omega \cap
\{|v_{n}|\leq \mu \}}f_{2}(u_{n})\frac{|u_{n}|^{\alpha _{2}}}{|v_{n}+\gamma
_{n}(v_{n})|^{\beta _{2}}}\varphi _{2}\text{ }\mathrm{d}x=0.  \label{50}
\end{equation}%
On the other hand, noting that 
\begin{eqnarray*}
&&\int_{\Omega \cap \{|v_{n}|>\mu \}}f_{2}(u_{n})\frac{|u_{n}|^{\alpha _{2}}%
}{|v_{n}+\gamma _{n}(v_{n})|^{\beta _{2}}}\varphi _{2}\text{ }\mathrm{d}x \\
&=&\int_{\Omega }f_{2}(u_{n})\frac{|u_{n}|^{\alpha _{2}}}{|v_{n}+\gamma
_{n}(v_{n})|^{\beta _{2}}}\varphi _{2}\text{ }\mathbbm{1}_{\{|v_{n}|>\mu \}}%
\mathrm{d}x
\end{eqnarray*}%
and $\mathbbm{1}_{\{|v_{n}|>\mu \}}\rightarrow \mathbbm{1}_{\{|v^{\ast
}|>\mu \}}$ a.e. on $\{x\in \Omega :|v_{n}|\neq \mu \}.$ By (\ref{38}) and (%
\ref{43}), together with Lebesgue's Theorem, it follows that%
\begin{equation}
\lim_{n\rightarrow +\infty }\int_{\Omega \cap \{|v_{n}|>\mu \}}f_{2}(u_{n})%
\frac{|u_{n}|^{\alpha _{2}}}{|v_{n}+\gamma _{n}(v_{n})|^{\beta _{2}}}\varphi
_{2}\text{ }\mathrm{d}x=\int_{\Omega \cap \{|v^{\ast }|>\mu \}}f_{2}(u^{\ast
})\frac{|u^{\ast }|^{\alpha _{2}}}{|v^{\ast }|^{\beta _{2}}}\varphi _{2}%
\text{ }\mathrm{d}x.  \label{51}
\end{equation}%
From (\ref{43}) and the fact that $\mathbbm{1}_{\{|v_{n}|>\mu \}}\rightarrow %
\mathbbm{1}_{\{|v^{\ast }|>0\}}$ a.e. in $\Omega $, as $\mu \rightarrow 0,$
because the set $\{x\in \Omega :|v^{\ast }(x)|=\mu \}$ is negligible, we
infer that%
\begin{equation}
\begin{array}{l}
\lim_{\mu \rightarrow 0}\int_{\Omega \cap \{|v^{\ast }|>\mu \}}f_{2}(u^{\ast
})\frac{|u^{\ast }|^{\alpha _{2}}}{|v^{\ast }|^{\beta _{2}}}\varphi _{2}%
\text{ }\mathrm{d}x \\ 
=\int_{\Omega \cap \{|v^{\ast }|>0\}}f_{2}(u^{\ast })\frac{|u^{\ast
}|^{\alpha _{2}}}{|v^{\ast }|^{\beta _{2}}}\varphi _{2}\text{ }\mathrm{d}x
\\ 
=\int_{\Omega }f_{2}(u^{\ast })\frac{|u^{\ast }|^{\alpha _{2}}}{|v^{\ast
}|^{\beta _{2}}}\varphi _{2}\text{ }\mathrm{d}x.%
\end{array}
\label{52}
\end{equation}%
Hence, gathering (\ref{44}), (\ref{50}) and (\ref{52}) together we deduce
that (\ref{42}) is fulfilled for all $\varphi _{2}\in \mathcal{H}%
_{+}^{1}(\Omega )$.

Finally, writing $\varphi _{2}=\varphi _{2}^{+}-\varphi _{2}^{-}$ for $%
\varphi _{2}\in \mathcal{H}^{1}(\Omega )$ and bearing in mind the linearity
property of (\ref{42}) in $\varphi _{2}$, we conclude that (\ref{42}) holds
for every $\varphi _{2}\in \mathcal{H}^{1}(\Omega )$. Consequently, on
account of (\ref{40})-(\ref{42}), we may pass to the limit in (\ref{55}) to
conclude that $(u^{\ast },v^{\ast })\in \mathcal{H}^{1}(\Omega )\times 
\mathcal{H}^{1}(\Omega )$ is a solution of problem $(\mathrm{P})$ within $[-%
\underline{u},\underline{u}]\times \lbrack -\underline{v},\underline{v}]$.
Property (\ref{11}) in Theorem \ref{T1} together with Remark \ref{R1} force
that $(u^{\ast },v^{\ast })$ is nodal in the sens that the components $%
u^{\ast }$ and $v^{\ast }$ are nontrivial and at least are not of the same
constant sign.

Assume that $u^{\ast }<0<v^{\ast }$. Test the first equation in $(\mathrm{P}%
) $ by $-(u^{\ast })^{-}$ we get 
\begin{equation*}
\int_{\Omega }|\nabla (u^{\ast })^{-}|^{2}+|(u^{\ast })^{-}|^{2})\text{ }%
\mathrm{d}x=-\int_{\Omega }(f_{1}(v^{\ast })|(u^{\ast })^{-}|^{\alpha
_{1}}+\rho )(u^{\ast })^{-}\mathrm{d}x<0,
\end{equation*}%
which forces $(u^{\ast })^{-}=0$, a contradiction. So assume $v^{\ast
}<0<u^{\ast }$. Test the second equation in $(\mathrm{P})$ by $-(v^{\ast
}){}^{-}$it follows that 
\begin{equation*}
\int_{\Omega }|\nabla (v^{\ast })^{-}|^{2}+|(v^{\ast })^{-}|^{2})\text{ }%
\mathrm{d}x=-\int_{\Omega }f_{2}(u^{\ast })\frac{|u^{\ast }|^{\alpha _{2}}}{%
|(v^{\ast })^{-}|^{\beta _{2}}}(v^{\ast })^{-}\mathrm{d}x<0.
\end{equation*}%
Hence, $(v^{\ast })^{-}=0$, a contradiction. Consequently, $u^{\ast }$ and $%
v^{\ast }$ cannot be of opposite constant sign. However, considering Theorem %
\ref{T1} we can conclude that $u^{\ast }$ and $v^{\ast }$ must change sign
simultaneously and therefore, $u^{\ast }v^{\ast }\geq 0$ in $\Omega $. This
completes the proof.

\bigskip

\textbf{Funding.} This research received no specific grant from any funding
agency in the public, commercial, or not-for-profit sectors.

\bigskip

\textbf{Conflict of Interest Statement.} The author has no competing
interests to declare that are relevant to the content of this article.

\bigskip

\textbf{Data Availability Statement.} No data sets were generated or
analyzed during the current study.

\end{document}